\def\RSthmtxt{theorem~}\newref{thm}{name = \RSthmtxt}}
\def\RSlemtxt{lemma~}\newref{lem}{name = \RSlemtxt}}
\theoremstyle{plain}
\newtheorem{thm}{\protect\theoremname}[section]
\newtheorem*{thmnn}{Theorem}
\theoremstyle{definition}
\newtheorem{defn}[thm]{\protect\definitionname}
\theoremstyle{plain}
\newtheorem{prop}[thm]{\protect\propositionname}
\newenvironment{proof}[1][\protect\proofname]{\par
	\normalfont\topsep6\p@\@plus6\p@\relax
	\trivlist
	\itemindent\parindent
	\item[\hskip\labelsep\scshape #1]\ignorespaces
}{%
	\endtrivlist\@endpefalse
}
\providecommand{\proofname}{Proof}
\theoremstyle{plain}
\newtheorem{lem}[thm]{\protect\lemmaname}
\theoremstyle{definition}
\theoremstyle{remark}
\newtheorem{rem}[thm]{\protect\remarkname}
\theoremstyle{plain}
\newtheorem{cor}[thm]{\protect\corollaryname}
\renewenvironment{bmatrix}{\begin{ytableau}}{\end{ytableau}}
\providecommand{\corollaryname}{Corollary}
\providecommand{\definitionname}{Definition}
\providecommand{\examplename}{Example}
\providecommand{\lemmaname}{Lemma}
\providecommand{\propositionname}{Proposition}
\providecommand{\remarkname}{Remark}
\providecommand{\theoremname}{Theorem}
\begin{document}

\title{Polynomial Expressions for the Dimensions of the Representations of Symmetric Groups and Restricted Standard Young Tableaux}

\author{Avichai Cohen and Shaul Zemel}

\maketitle

\section*{Introduction}

The representations of the symmetric group $S_{n}$ are parameterized by the partitions of $n$. The paper \cite{[R]} considers the representations of $S_{n}$ that have minimal dimensions, and discovers that as $n$ grows, these representations correspond to partitions with a very long first row (up to transposition). This means that we fix a (typically small) number $k$ and a partition $\lambda$ of $k$, and consider, for large $n$, the partition $(n-k,\lambda)$ obtained by adding to $\lambda$ a single long first row to get a partition of $n$.

It is observed that the dimension of the representation obtained in this way, with fixed $k$ and $\lambda$ and letting $n$ grow, is a polynomial of degree $k$ in $n$. Moreover, this polynomial is integer-valued, and the author proves that when expanding this polynomial in the basis consisting of differences of successive binomial polynomials, the coefficients are non-negative. This is used for proving that the dimensions of the representations obtained from small $k$ are indeed minimal for large $n$, and determine, in some cases, the threshold for the value of $n$ starting from which each such representation is indeed of minimal dimension.

We remark that the results of \cite{[R]} are for complex representations, and the paper \cite{[J]} contains some extensions of these results to representations over other fields.

\smallskip

The space of integer-valued polynomials has, however, a ``more natural'' basis, consisting of the binomial polynomials. In this paper we investigate, for every partition $\lambda$ of a non-negative integer $k$, the expansion of the aforementioned polynomial in $n$ in the basis of binomial polynomials. We prove that rather than $k$ polynomials, only the $\ell(\lambda)+1$ maximal binomial polynomials of degree at most $k$ show up, where $\ell(\lambda)$ is the number of non-zero summands in $\lambda$ (this is unlike the expansions from \cite{[R]}, which usually require all of the basis elements, except for one which is proved to never show up---see Theorem D of that reference). The coefficients of the binomial polynomials that do show up are alternating in sign, and we prove that they count restricted standard Young tableaux of shape $\lambda$.

To state the result explicitly, we need some notation. Using the standard notation from \cite{[S]} and others for the dimensions of the representations of symmetric group, we consider $f^{(n-k,\lambda)}$ as a polynomial in $n$. The statement that only the maximal $\ell(\lambda)+1$ basis elements participate in the expansion means that we can write \[f^{(n-k,\lambda)}=\sum_{h=0}^{\ell(\lambda)}(-1)^{h}a_{\lambda,h}\binom{n}{k-h},\] with the signs showing up expressing the alternating property.

For any partition $\lambda$ of $k$, we denote by $\operatorname{SYT}_{h}(\lambda)$ the set of those standard Young tableaux of shape $\lambda$ in which the numbers between 1 and $h$ are posited at the beginning of the first $h$ rows. Our main result, which is Theorem \ref{coeffSYT} below, is as follows.
\begin{thmnn}
For every partition $\lambda$ and every $0 \leq h\leq\ell(\lambda)$, the coefficient $a_{\lambda,h}$ equals the cardinality of the set $\operatorname{SYT}_{h}(\lambda)$.
\end{thmnn}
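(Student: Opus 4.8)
The plan is to prove $a_{\lambda,h}=\bigl|\operatorname{SYT}_h(\lambda)\bigr|$ by induction on $k=|\lambda|$, by exhibiting one and the same recursion for both sides, obtained from deleting the cell carrying the largest entry. Write $P_\lambda(n):=f^{(n-k,\lambda)}$ for the dimension polynomial, set $b_{\lambda,h}:=\bigl|\operatorname{SYT}_h(\lambda)\bigr|$, and let $\Delta P_\lambda(n):=P_\lambda(n)-P_\lambda(n-1)$. First I would record the branching recursion for $P_\lambda$. Applying the branching rule for $S_n\downarrow S_{n-1}$ to the shape $(n-k,\lambda)$, valid once $n$ is large enough that the end of the first row is a genuine removable corner, the removable corners are the first-row corner (removal gives $(n-1-k,\lambda)$) together with the removable corners $c$ of $\lambda$ (removal gives $(n-k,\lambda-c)$, where $\lambda-c\vdash k-1$). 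This yields $P_\lambda(n)=P_\lambda(n-1)+\sum_c P_{\lambda-c}(n-1)$, an equality of polynomials since it holds for all large $n$; equivalently $\Delta P_\lambda(n)=\sum_c P_{\lambda-c}(n-1)$.

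Next I would convert this into a recursion for the coefficients. Substituting the expansion $P_\lambda=\sum_{h=0}^{\ell(\lambda)}(-1)^h a_{\lambda,h}\binom{n}{k-h}$ into $\Delta P_\lambda$ and using $\Delta\binom{n}{m}=\binom{n-1}{m-1}$ --- which annihilates the top term $h=k$ (occurring only for $\lambda=(1^k)$) because $\binom{n-1}{-1}=0$ --- and then matching the coefficient of $\binom{n-1}{(k-1)-h}$ on both sides against the analogous expansions of the $P_{\lambda-c}(n-1)$, linear independence of the binomial polynomials gives, for $0\le h\le\min(\ell(\lambda),k-1)$,
\[
a_{\lambda,h}=\sum_{c:\,\ell(\lambda-c)\ge h}a_{\lambda-c,h},
\]
the sum running over removable corners $c$ of $\lambda$ for which $\lambda-c$ retains at least $h$ rows.

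I would then prove the identical recursion $b_{\lambda,h}=\sum_{c:\,\ell(\lambda-c)\ge h}b_{\lambda-c,h}$ for the same range of $h$, by the map that deletes from a tableau the cell holding the maximal entry $k$. The point to verify is that when $h<k$ this cell $c$ is never one of the forced cells of value $i\le h$ in the first column, so the restriction defining $\operatorname{SYT}_h$ survives the deletion and one automatically has $\ell(\lambda-c)\ge h$; conversely, the condition $\ell(\lambda-c)\ge h$ is exactly what forces a corner $c$ to avoid the first-column cells $(1,1),\dots,(h,1)$, so that reinserting $k$ at $c$ returns an element of $\operatorname{SYT}_h(\lambda)$. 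Checking that this deletion/insertion is a bijection between $\operatorname{SYT}_h(\lambda)$ and $\bigsqcup_{c:\ell(\lambda-c)\ge h}\operatorname{SYT}_h(\lambda-c)$ --- in particular that the inequality $\ell(\lambda-c)\ge h$ correctly governs the boundary case of a corner lying in the first column --- is the step I expect to be the main obstacle.

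Finally I would close the induction. The base case is $k=0$, where $a_{\emptyset,0}=1=b_{\emptyset,0}$. The only pair $(\lambda,h)$ not reached by the recursions above is $\bigl((1^k),k\bigr)$, which I would treat by hand: from $f^{(n-k,1^k)}=\binom{n-1}{k}=\sum_{h=0}^k(-1)^h\binom{n}{k-h}$ one reads off $a_{(1^k),h}=1$ for all $h$, while the single-column shape has a unique standard filling, giving $b_{(1^k),h}=1$ for all $h$. For every other $\lambda$ one has $\ell(\lambda)\le k-1$, so all admissible $h$ lie in the range handled above, and the two coinciding recursions together with the inductive hypothesis for partitions of $k-1$ finish the proof (and, as a by-product, re-confirm the nonnegativity, hence the alternating-sign property, of the coefficients).
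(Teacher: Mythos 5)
Your proposal is correct and follows essentially the same route as the paper: the coefficient recursion $a_{\lambda,h}=\sum_{v}a_{\lambda-v,h}$ obtained from the branching rule on $(n-k,\lambda)$ plus Pascal's identity and linear independence is the paper's Lemma \ref{brcoeffs}, the matching tableau recursion via deleting the cell containing the largest entry is Lemma \ref{halphabr} with $\alpha=0$, and the single-column case $(1^k)$ is handled by hand exactly as in Lemma \ref{onecol}, with the induction on $k$ closing the argument as in Theorem \ref{coeffSYT}. The only cosmetic difference is that you restrict the sum to corners with $\ell(\lambda-c)\ge h$ where the paper sums over all inner corners and lets the extra terms vanish.
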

In view of this result, the fact that there are no such restricted tableaux when $h>\ell(\lambda)$ corresponds to the vanishing of the coefficients associated with such values of $h$. Note that for $h=0$ and $h=1$ there are no restrictions, and the case $h=2$ yields a decomposition of the full set of standard Young tableaux of shape $\lambda$, which implies that for symmetric partitions of numbers that are larger than 1 this set, which always has an even cardinality, has a ``natural choice'' of a representative of any pair of transpose tableaux (see Proposition \ref{SYT2} and Corollary \ref{symev} below).

\smallskip

The restriction that we posed on the Young tableaux involved the numbers between 1 to $h$, and forced them to be in increasing rows of the tableau. One can pose a similar restriction, but on the numbers between $1+\alpha$ and $h+\alpha$ for any $0\leq\alpha \leq k-h$. We denote the set of standard Young tableaux of shape $\lambda$ satisfying the restriction associated with $h$ and $\alpha$ by $\operatorname{SYT}_{h,\alpha}(\lambda)$, and in fact, the first comparison that we proved for the coefficients $a_{\lambda,h}$ was with the cardinality of $\operatorname{SYT}_{h,k-h}(\lambda)$.

However, Y. Roichman and R. Adin shared with us that the cardinalities of these sets are the same for every value of $\alpha$, and the proof is easier for the case $\alpha=0$ as stated above. This independence can be obtained by considering the results of the paper \cite{[SW]}, combined with Sch\"{u}tzenberger's jeu de taquin (see Section 3.7 of \cite{[S]} for background on the latter). It is also a consequence of Theorem 7.2 of \cite{[AR]}, and another way to obtain it is by combining Corollary 4.2 of \cite{[M]} with the fact that the sum of fundamental quasi-symmetric functions that are associated with the descent set of a standard Young tableau yields the corresponding Schur symmetric function (see, e.g., \cite{[G]}, and the related paper \cite{[GR]}).

A recurring theme in Algebraic Combinatorics is to find, whenever two sets of objects have the same cardinality, an explicit and natural bijection between them. Our second result is the construction of such a bijection for consecutive values of $\alpha$.
\begin{thmnn}
Given a partition $\lambda$ of an integer $k$, an integer $0 \leq h\leq\ell(\lambda)$, and another integer $1\leq\alpha \leq k-h$, there exist explicit inverse maps $\varphi_{\lambda,h,\alpha}:\operatorname{SYT}_{h,\alpha}(\lambda)\to\operatorname{SYT}_{h,\alpha-1}(\lambda)$ and $\phi_{\lambda,h,\alpha}:\operatorname{SYT}_{h,\alpha-1}(\lambda)\to\operatorname{SYT}_{h,\alpha}(\lambda)$, which are the identity on elements of the intersection, and which commute with adding and removing boxes as in the branching rule.
\end{thmnn}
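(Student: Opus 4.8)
The plan is to reformulate the defining conditions via descent sets and then to realize the maps as localized \emph{jeu de taquin} slides, with the branching rule controlling how the construction propagates toward the larger entries. It is convenient to note first that the condition defining $\operatorname{SYT}_{h,\alpha}(\lambda)$ is equivalent to the requirement that the run of consecutive descents $\{\alpha+1,\alpha+2,\ldots,\alpha+h-1\}$ be contained in $\operatorname{Des}(T)$, the runs for $\alpha$ and $\alpha-1$ overlapping in $\{\alpha+1,\ldots,\alpha+h-2\}$. Thus passing from $\operatorname{SYT}_{h,\alpha}(\lambda)$ to $\operatorname{SYT}_{h,\alpha-1}(\lambda)$ amounts to installing the descent at $\alpha$ while allowing the descent at $\alpha+h-1$ to disappear, and the intersection $\operatorname{SYT}_{h,\alpha}(\lambda)\cap\operatorname{SYT}_{h,\alpha-1}(\lambda)$ is exactly the set of $T$ whose descent set contains the longer run $\{\alpha,\alpha+1,\ldots,\alpha+h-1\}$. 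On that intersection both maps are forced to be the identity, so the real work concerns the tableaux of $\operatorname{SYT}_{h,\alpha}(\lambda)$ lying outside $\operatorname{SYT}_{h,\alpha-1}(\lambda)$; for $h\ge 2$ these are exactly the tableaux whose descent set omits $\alpha$, the cases $h\le 1$ imposing no condition and giving the identity.

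For such a tableau, so that $h\ge 2$ and $\alpha\notin\operatorname{Des}(T)$, the values $\alpha+1,\ldots,\alpha+h$ lie in strictly increasing rows while $\alpha$ sits weakly above $\alpha+1$. I would define $\varphi_{\lambda,h,\alpha}$ to be the minimal slide that forces the descent at $\alpha$: when $\alpha+1$ lies strictly above $\alpha$ the two are in general position and a single exchange of the values $\alpha$ and $\alpha+1$ suffices; when instead $\alpha+1$ is the right neighbour of $\alpha$ in a common row the exchange is blocked, and one slides $\alpha+1$ downward by exchanging it with $\alpha+2$, repeating up the chain along the jeu de taquin path until the obstruction is cleared. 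The map $\phi_{\lambda,h,\alpha}$ is the reverse slide. Since this procedure only ever moves values in the interval $[\alpha,\alpha+h]$, it automatically fixes every tableau in the intersection and leaves untouched the part of the tableau carrying the larger entries; in particular, whenever the maximal entry $k=|\lambda|$ exceeds $\alpha+h$, deleting its box commutes with the slide, which is the branching compatibility in that range and also lets one argue by induction on $k$.

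The steps I would then carry out are: prove that the slide terminates in a genuine standard Young tableau; track it as a composition of elementary value transpositions $s_\alpha,s_{\alpha+1},\ldots$, using that each such transposition alters only the descents at its three neighbouring positions, in order to show that the net effect is precisely to shift the run one step, so that the image lands in $\operatorname{SYT}_{h,\alpha-1}(\lambda)$; and verify that the forward and backward slides are mutually inverse and reduce to the identity exactly on the intersection. The commutation with the branching rule then splits into the easy range $\alpha+h<k$, handled as above, and the single extremal case $\alpha+h=k$.

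The hard part will be exactly this extremal case $\alpha=k-h$, in which the value interval $[\alpha,\alpha+h]$ reaches the maximal entry $k=|\lambda|$ and the slide can reach and displace its box. Here two things need genuine care: controlling the propagation when the blocking pair shares a column, so that no descent of the surviving run $\{\alpha+1,\ldots,\alpha+h-2\}$ is accidentally destroyed; and verifying that, even though the box of $k$ may now move, deleting that box still intertwines the extremal move on $\lambda$ with the inductively defined map on the smaller shape, so that the asserted commutation with adding and removing boxes persists. Once the extremal move is shown to be a well-defined, shape-preserving bijection with these compatibilities, combining it with the inductive step on $k$ produces the pair $\varphi_{\lambda,h,\alpha},\phi_{\lambda,h,\alpha}$ together with all three claimed properties.
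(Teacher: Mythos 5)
There is a genuine gap: the one concrete construction you give is incorrect, and the case you defer as ``the hard part'' is precisely where the paper's entire construction lives. Your claim that when $\alpha+1$ lies strictly above $\alpha$ ``a single exchange of the values $\alpha$ and $\alpha+1$ suffices'' fails as soon as $h\geq3$ and $R_{T}(\alpha+2) \leq R_{T}(\alpha)$: after the swap the new tableau has $R(\alpha+1)=R_{T}(\alpha) \geq R_{T}(\alpha+2)=R(\alpha+2)$, so the descent at $\alpha+1$, which must survive in the run required for $\operatorname{SYT}_{h,\alpha-1}(\lambda)$, is destroyed. Concretely, take $\lambda=(2,2,1)\vdash5$, $h=3$, $\alpha=2$, and let $T$ be the standard tableau with first row $1,3$, second row $2,4$, and third row $5$; then $T\in\operatorname{SYT}_{3,2}(\lambda)\setminus\operatorname{SYT}_{3,1}(\lambda)$ and $3$ lies strictly above $2$ in a different column, but swapping $2$ and $3$ yields the tableau with rows $1,2$ and $3,4$ and $5$, which is standard yet not in $\operatorname{SYT}_{3,1}(\lambda)$ since $3$ and $4$ share a row. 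The paper's image of this $T$ is instead the tableau with rows $1,2$ and $3,5$ and $4$, obtained by a relabeling that involves \emph{all} the values $\alpha,\ldots,h+\alpha$ and is governed by the parameter $q=\max\{1 \leq i \leq h\;|\;R_{T}(i+\alpha) \leq R_{T}(\alpha)\}$ (here $q=2$), not just by the pair $\alpha,\alpha+1$. Your fallback chain of exchanges is only invoked in the same-row case and is not specified precisely enough to check termination, standardness, or invertibility.

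Structurally, your plan also inverts the division of labor. You correctly observe that when $h+\alpha<k$ the boxes carrying entries larger than $h+\alpha$ can be stripped off and the problem reduced by induction on $k$ via the branching rule; this matches the paper (Lemma \ref{halphabr}, Theorem \ref{indepofalpha}, Remark \ref{genmaps}). But that reduction bottoms out at $k=h+\alpha$, and there the paper does all of the real work: it proves that $v_{T}(q+\alpha)$ is an inner corner of $\lambda$ (Lemma \ref{qalphaIC}), defines $\varphi_{\lambda,h,\alpha}$ as an explicit cyclic relabeling of the boxes of $\alpha,\ldots,h+\alpha$ sending $h+\alpha$ into that corner (Definition \ref{mapredalpha}), checks standardness and the $(h,\alpha-1)$-condition (Proposition \ref{decalpha}), and matches the parameter $q$ with the parameter $p$ of the inverse map $\phi_{\lambda,h,\alpha}$ to prove bijectivity (Lemma \ref{relspq}, Proposition \ref{invmaps}). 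Your proposal leaves exactly this extremal case as a to-do list, so even setting aside the counterexample above, the essential content of the theorem remains unproven.
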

The most general formula is given in Remark \ref{genmaps} below.

\smallskip

The paper is divided into 4 sections. Section \ref{BinPol} gives the basic notions involving integer-valued polynomials. Section \ref{PolofPart} introduces partitions, the associated polynomials, and gives the first properties of their expansion in the binomial polynomials. Section \ref{ResTab} investigates the restricted standard Young tableaux, and proves the result relating them to the coefficients in the expansion from above. Finally, Section \ref{Bijections} constructs the bijections between the sets of restricted standard Young tableaux with consecutive values of $\alpha$, proves their properties, and concludes with an example.

\smallskip

We are thankful to N. Linial, G. Kalai, B. Rhoades, and B. Sagan for some advice regarding this paper. We are very much indebted to Y. Roichman and R. Adin for their interest in the question, for letting us know about the independence result of the parameter $\alpha$, for making us acquainted with several related reference, and for numerous fascinating discussions on this topic.

\section{The Binomial Basis for Rational Polynomials \label{BinPol}}

In this section we present the binomial polynomials as a basis for the integer-valued polynomials over $\mathbb{Z}$, with some of their properties. This material is classical and well-known, with many generalizations to rings other than $\mathbb{Z}$ (see the old references \cite{[P]} and \cite{[O]}, the more comprehensive treatise \cite{[CC1]}, some advances in \cite{[Z]} and \cite{[CC2]}, and an application to Number Theory in \cite{[dSI]}), and we include it mainly to set up the notation that will be used later in the paper.

\smallskip

For $h\in\mathbb{N}$, the \emph{binomial polynomial} is defined by \[p_{h}(x):=\frac{x\cdot(x-1)\cdot(x-2)\cdot\ldots\cdot\big(x-(h-1)\big)}{h!}=\frac{\prod_{i=0}^{h-1}(x-i)}{h!},\] including the case $p_{0}:=1$. It is clear that $p_{h}\in\mathbb{Q}[x]$, and that it has degree $h$. It follows immediately from these properties that for every $k\geq0$, the set $\{p_{h}\}_{h=0}^{k}$ is a basis for the space of polynomials from $\mathbb{Q}[x]$ having degree at most $k$ (and thus $\{p_{h}\}_{h=0}^{\infty}$ is a basis for $\mathbb{Q}[x]$).

The polynomials $p_{h}$ are \emph{integer-valued polynomials}, in the following sense.
\begin{lem}
For any $h\in\mathbb{N}$ and $n\in\mathbb{Z}$ we have $p_{k}(n)\in\mathbb{Z}$. \label{intval}
\end{lem}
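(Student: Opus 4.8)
The plan is to reduce the claim to the Pascal-type recurrence satisfied by the $p_{h}$ and then to run a two-directional induction, first on $h$ and then on $n$. The point is that although $p_{h}(n)$ carries the transparent combinatorial meaning of a binomial coefficient when $n\geq0$, this interpretation says nothing directly about negative integers, so I want a mechanism that propagates integrality in both directions along $\mathbb{Z}$.

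First I would establish the polynomial identity
\[p_{h}(x+1)-p_{h}(x)=p_{h-1}(x)\qquad(h\geq1).\]
Both sides are polynomials in $x$: the left-hand side has degree at most $h-1$, since the leading terms of $p_{h}(x+1)$ and $p_{h}(x)$ cancel, and the right-hand side has degree exactly $h-1$. For every integer $n\geq0$ the value $p_{h}(n)$ is the ordinary binomial coefficient $\binom{n}{h}$, and the identity then reads $\binom{n+1}{h}=\binom{n}{h}+\binom{n}{h-1}$, which is the classical Pascal relation. As the two polynomials agree at the infinitely many points $n\geq0$, they coincide as elements of $\mathbb{Q}[x]$. (Alternatively one can verify the identity by a direct manipulation of the product formula defining $p_{h}$.)

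With the recurrence in hand I would argue by induction on $h$. For $h=0$ we have $p_{0}\equiv1$, which is integer-valued, giving the base case. Assume now that $p_{h-1}(n)\in\mathbb{Z}$ for every $n\in\mathbb{Z}$. Since the numerator of $p_{h}$ contains the factor $x$, we have $p_{h}(0)=0\in\mathbb{Z}$. Rewriting the recurrence as $p_{h}(n+1)=p_{h}(n)+p_{h-1}(n)$ and inducting upward on $n$ shows $p_{h}(n)\in\mathbb{Z}$ for all $n\geq0$; rewriting it as $p_{h}(n)=p_{h}(n+1)-p_{h-1}(n)$ and inducting downward shows the same for all $n<0$. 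This completes the inductive step and hence the proof.

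The result is classical and presents no serious difficulty; the one point that requires a little care, and which I would flag as the main obstacle, is the treatment of negative $n$, where the subset-counting interpretation of $\binom{n}{h}$ is unavailable. The two-directional induction above handles this cleanly. Equivalently, one could invoke the reflection identity $p_{h}(-m)=(-1)^{h}\binom{m+h-1}{h}$, which exhibits $p_{h}$ at negative integers as $\pm$ an ordinary binomial coefficient and hence as an integer.
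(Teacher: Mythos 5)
Your proof is correct, but it takes a different route from the paper's. The paper disposes of $n\geq0$ by the combinatorial meaning of $\binom{n}{h}$ and then handles $n<0$ in one stroke via the reflection identity $p_{h}(x)=(-1)^{h}p_{h}(h-1-x)$, which maps negative arguments back to non-negative ones; this is exactly the alternative you flag in your last sentence, in the equivalent form $p_{h}(-m)=(-1)^{h}\binom{m+h-1}{h}$. You instead establish the difference identity $p_{h}(x+1)-p_{h}(x)=p_{h-1}(x)$ and run a two-directional induction on $n$ inside an induction on $h$, anchored at $p_{h}(0)=0$. Both arguments are complete. The paper's is shorter, essentially a two-line reduction; yours is more self-contained in that it never leaves the realm of the recurrence and makes explicit the general principle that a polynomial with integral value at one point and integer-valued first difference is integer-valued everywhere. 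It is worth noting that the identity you prove first is precisely the one the paper derives later, by the same ``agreement at infinitely many points'' argument, in the proof of Proposition \ref{coeffsinZ}; so your organization would let that identity be stated once and reused, at the cost of a slightly longer proof of the present lemma. (The statement's $p_{k}(n)$ is a typo in the paper for $p_{h}(n)$; both your proof and the paper's address the intended claim.)
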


\begin{proof}
When $n\geq0$ then $p_{h}(n)$ is the binomial coefficient $\binom{n}{k}$, which is known to be an integer. When $n<0$ we note that $p_{h}$ satisfies the functional equation $p_{h}(x)=(-1)^{h}p_{h}(h-1-x)$, which reduces this case to the previous one. This proves the lemma.
\end{proof}

The basis property implies that every polynomial $p(x)$ of degree at most $k$ in $\mathbb{Q}[x]$ can be written uniquely as $\sum_{h=0}^{k}a_{h}p_{h}(x)$, and Lemma \ref{intval} implies that if $a_{h}\in\mathbb{Z}$ for every $h$ then $p$ is also integer-valued. The converse, and even a slight strengthening of it, is also true.
\begin{prop}
If $p(x)=\sum_{h=0}^{k}a_{h}p_{h}(x)$ satisfies the property that $p(n)\in\mathbb{Z}$ for every integer $n\gg0$, then $a_{h}\in\mathbb{Z}$ for all $0 \leq h \leq k$. \label{coeffsinZ}
\end{prop}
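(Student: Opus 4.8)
The plan is to use the forward difference operator $\Delta$, defined on any function $q$ by $(\Delta q)(x):=q(x+1)-q(x)$, and to exploit the fact that it acts very simply on the binomial basis. First I would verify the identity $\Delta p_{h}=p_{h-1}$ for every $h\geq1$ (together with $\Delta p_{0}=0$). This is an identity of polynomials, so it suffices to check it at the non-negative integers, where it reduces to the classical Pascal relation $\binom{n+1}{h}-\binom{n}{h}=\binom{n}{h-1}$; since two polynomials agreeing at infinitely many points coincide, the identity holds in $\mathbb{Q}[x]$. By linearity, applying $\Delta$ to $p=\sum_{h=0}^{k}a_{h}p_{h}$ lowers every index by one, so that $\Delta^{j}p=\sum_{h=j}^{k}a_{h}p_{h-j}$ for each $j$, and in particular $\Delta^{k}p$ is the constant polynomial $a_{k}$.

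Next I would observe that one application of $\Delta$, and hence the $k$-fold composition $\Delta^{k}$, expresses its value at a point $n$ as an integer linear combination of the values $p(n),p(n+1),\ldots,p(n+k)$; explicitly $\Delta^{k}p(n)=\sum_{j=0}^{k}(-1)^{k-j}\binom{k}{j}p(n+j)$. Choosing any integer $n$ large enough that $n$ and all of $n+1,\ldots,n+k$ lie in the range where the hypothesis applies, every $p(n+j)$ is an integer, and therefore $a_{k}=\Delta^{k}p(n)\in\mathbb{Z}$.

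Finally I would conclude by downward induction on the degree. Having shown $a_{k}\in\mathbb{Z}$, I would consider $q:=p-a_{k}p_{k}=\sum_{h=0}^{k-1}a_{h}p_{h}$. By Lemma \ref{intval} the function $a_{k}p_{k}$ takes integer values at all non-negative integers, so $q(n)=p(n)-a_{k}p_{k}(n)\in\mathbb{Z}$ for every $n\gg0$ as well; since $\deg q\leq k-1$, the induction hypothesis (with trivial base case $k=0$, where $p=a_{0}$ is itself the integer $p(n)$) yields $a_{h}\in\mathbb{Z}$ for all $0\leq h\leq k-1$, completing the proof. There is no genuine obstacle here, the argument being entirely classical; the only point requiring a little care is to keep all evaluations at large arguments, which is automatic since $\Delta^{k}$ evaluated at $n$ only involves the points $n,\ldots,n+k$, all exceeding any fixed threshold once $n$ is large.
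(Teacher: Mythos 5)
Your argument is correct and rests on the same key identity $\Delta p_{h}=p_{h-1}$ (verified via Pascal's rule at infinitely many integers) and the same induction on $k$ as the paper's proof, so this is essentially the paper's approach. The only difference is organizational: the paper applies $\Delta$ once and invokes the induction hypothesis to obtain $a_{1},\ldots,a_{k}$ simultaneously, saving $a_{0}$ for last by subtracting the integer-valued tail, whereas you apply $\Delta^{k}$ to extract $a_{k}$ first as an explicit integer combination of values $p(n),\ldots,p(n+k)$ and then subtract $a_{k}p_{k}$ before inducting--both are valid instances of the classical finite-difference argument.
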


\begin{proof}
The proof is by induction on $k$. If $k=0$ then $p(x)$ is a constant, which is thus an integer by substituting a large enough integer $n$.

Assume now that the result holds for $k-1$, and take a polynomial $p(x)=\sum_{h=0}^{k}a_{h}p_{h}(x)$ with our property. It follows that $p(n+1)-p(n)$ is also an integer for any $n\gg0$, and we note that for a positive integer $n$ we have $p_{h}(n+1)-p_{h}(n)=\binom{n+1}{h}-\binom{n}{h}=\binom{n}{h-1}=p_{h-1}(n)$ (via Pascal's identity). It follows that $p_{h}(x+1)-p_{h}(x)=p_{h-1}(x)$ as polynomials (as both sides coincide on infinitely many values of the variable), and we get \[p(x+1)-p(x)=\sum_{h=0}^{k}a_{h}\big(p_{h}(x+1)-p_{h}(x)\big)=\sum_{h=1}^{k}a_{h}p_{h-1}(x)=\sum_{h=0}^{k-1}a_{h+1}p_{h}(x).\] But then this is the expansion of a polynomial of degree $k-1$ with our property, so that the induction hypothesis yields, after replacing $h+1$ by $h$ back again, that $a_{h}\in\mathbb{Z}$ for all $h\geq1$. But then $p(x)$ is the sum of $\sum_{h=1}^{k}a_{h}p_{h}(x)$, which is integer-valued by Lemma \ref{intval}, and the constant $a_{0}$. By substituting a large enough integer again, we deduce that $a_{0}\in\mathbb{Z}$ as well. This completes the proof of the proposition.
\end{proof}

We will also need the following property of the coefficients in our expansions.
\begin{lem}
Write a polynomial $p(x)\in\mathbb{Q}[x]$, of degree at most $k$, as $\sum_{h=0}^{k}a_{h}p_{h}(x)$ again. Then for every $0 \leq t \leq k$ we have $a_{h}=0$ for every $0 \leq h \leq t$ if and only if $p(m)=0$ for every $0 \leq m \leq t$. \label{vancoeffs}
\end{lem}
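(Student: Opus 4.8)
The plan is to exploit the single fact that for a non-negative integer $m$ one has $p_{h}(m)=\binom{m}{h}$, which vanishes whenever $m<h$ and equals $1$ when $m=h$. This makes the square matrix $\big(p_{h}(m)\big)_{0\le m,h\le k}$ unipotent lower-triangular (nonzero entries only for $h\le m$, with ones on the diagonal), and the whole lemma is a transparent consequence of this triangular structure relating the vector of evaluations $\big(p(m)\big)_{m}$ to the vector of coefficients $\big(a_{h}\big)_{h}$. I would treat the two implications separately.

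For the ``only if'' direction, assume $a_{h}=0$ for all $0\le h\le t$, so that $p(x)=\sum_{h=t+1}^{k}a_{h}p_{h}(x)$. Then for any integer $0\le m\le t$ and any index $h\ge t+1$ we have $m<h$, hence $p_{h}(m)=\binom{m}{h}=0$; summing over $h$ gives $p(m)=0$. This direction uses only the vanishing of $\binom{m}{h}$ below the diagonal.

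For the ``if'' direction, assume $p(m)=0$ for all $0\le m\le t$, and recover the coefficients by a finite induction on $m$ from $0$ to $t$. Writing $p(m)=\sum_{h=0}^{m}a_{h}\binom{m}{h}$ (the terms with $h>m$ drop out, since $\binom{m}{h}=0$ there), the base case $p(0)=a_{0}$ forces $a_{0}=0$, and at stage $m$ the inductive hypothesis $a_{0}=\dots=a_{m-1}=0$ reduces the identity $0=p(m)=\sum_{h=0}^{m}a_{h}\binom{m}{h}$ to $0=a_{m}\binom{m}{m}=a_{m}$, using $\binom{m}{m}=1$. Hence $a_{h}=0$ for all $0\le h\le t$.

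I do not expect a genuine obstacle here: the content is entirely the triangularity of the binomial matrix, and the only point requiring a little care is to remember that the evaluation points are non-negative integers, so that both the identity $p_{h}(m)=\binom{m}{h}$ and the vanishing $\binom{m}{h}=0$ for $m<h$ are available. One could phrase the same argument by saying that the unipotent triangular change-of-basis matrix restricts to an invertible triangular block on the first $t+1$ coordinates, but the explicit induction above is the most economical route.
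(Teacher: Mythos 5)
Your proof is correct and is essentially the paper's own argument: the ``only if'' direction uses the vanishing of $\binom{m}{h}$ for $m<h$ exactly as in the paper, and your finite induction on the evaluation point $m$ is the same triangularity argument as the paper's induction on $t$ (peeling off one coefficient per evaluation, using $\binom{t}{t}=1$). The matrix-language framing is just a repackaging of the identical computation.
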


\begin{proof}
Assume that $a_{h}=0$ for all $0 \leq h \leq t$, so that $p(x)=\sum_{h=t+1}^{k}a_{h}p_{h}(x)$. Then for any integer $n\geq0$ we get $p(n)=\sum_{h=t+1}^{k}a_{h}\binom{n}{h}$, and if $n \leq t<h$ for every $h$ showing up in this sum, then $p(n)=0$ as desired.

For the other direction we argue by induction on $t$. When $t=0$ we substitute $x=0$ in the polynomial, and as in the previous paragraph we get $p(0)=\sum_{h=0}^{k}a_{h}\binom{0}{h}=a_{0}$. Hence if $p(0)=0$ then $a_{0}=0$ as asserted.

Assume now that the assertion holds for $t-1$, and that $p(m)=0$ for every $0 \leq m \leq t$. Thus $p(m)=0$ for every $0 \leq n \leq t-1$, and the induction hypothesis implies that $a_{h}=0$ for all $0 \leq h \leq t-1$, meaning that $p(x)=\sum_{h=t}^{k}a_{h}p_{h}(x)$. We now substitute $x=t$, and get $0=p(t)=\sum_{h=t}^{k}a_{h}\binom{t}{h}=a_{t}$ as above, yielding the additional required vanishing. This proves the lemma.
\end{proof}

\section{The Polynomial Associated with a Partition \label{PolofPart}}

We begin by recalling a few definitions. A \emph{partition} $\lambda$ is a finite sequence $\lambda:=(\lambda_{1},\ldots,\lambda_{\ell})$ of non-increasing positive integers. The number $\ell$ of integers showing up in the partition is called the \emph{length} of $\lambda$, and denoted by $\ell(\lambda)$. We say that $\lambda$ is a \emph{partition of $k$}, denoted as $\lambda \vdash k$, if $\sum_{i=1}^{\ell}\lambda_{i}=k$.

Partitions are often described using their Young diagrams, also knows as Ferrers diagrams (or just diagrams). The \emph{diagram of shape $\lambda$} is an array of boxes having $\ell(\lambda)$ left-justified rows, such that the $i$'th row is of length $\lambda_{i}$. We will be using the English notation, meaning that $i=1$ is the top row and $i=\ell(\lambda)$ is the bottom one. We will identify $\lambda$ with its diagram, and given positive integers $i$ and $j$, the statement $(i,j)\in\lambda$ means that the box in row $i$ and column $j$ in the diagram of shape $\lambda$.

Recall that given a partition $\lambda\vdash k$, a \emph{Young tableau of shape $\lambda$} is a diagram of shape $\lambda$, in which with the boxes are filled bijectively by the integers between 1 and $k$. A Young tableau of shape $\lambda$ is \emph{standard} if the numbers in each row and each column are put in an increasing order. We set
\[\operatorname{SYT}(\lambda):=\{T\;|\;T\text{ is a standard Young tableau of shape }\lambda\},\quad\mathrm{and}\quad f^{\lambda}:=|\operatorname{SYT}(\lambda)|.\]

For example, if $\lambda=(2,1)\vdash3$ than all the Young tableaux of shape $\lambda$ are
\[\begin{bmatrix}1 & 2 \\ 3\end{bmatrix},\ \begin{bmatrix}2 & 1 \\ 3\end{bmatrix},\ \begin{bmatrix}1 & 3 \\ 2\end{bmatrix},\ \begin{bmatrix}3 & 1 \\ 2\end{bmatrix},\ \begin{bmatrix}2 & 3 \\ 1\end{bmatrix},\mathrm{\ and\ }\begin{bmatrix}3 & 2 \\ 1\end{bmatrix},\]
among which the first and the third are the standard ones, so that $f^{\lambda}=2$.

\smallskip

The numbers $f^{\lambda}$ have a lot of interesting properties, of which we will use two. The first one is a direct evaluation using the \emph{hook formula} of Frame, Robinson, and Thrall, which we now describe. If $\lambda$ is any partition, and $v=(i,j)$ is a box in the diagram of shape $\lambda$, the we define it's \emph{hook} and \emph{hook length} to be
\[H_{v}:==H_{i,j}:=\{(i,j')\in\lambda\;|\;j' \geq j\}\cup\{(i',j)\in\lambda\;|\;i'\geq i\}\quad\mathrm{and}\quad h_{v}=h_{i,j}=|H_{i,j}|\] respectively (both depending also on $\lambda$, despite the notation not including it).

The result is now as follows.
\begin{thm}[Hook Formula]
For any $\lambda \vdash k$ we have the equality \[f^{\lambda}=\frac{k!}{\prod_{v\in\lambda}h_{v}}.\] \label{hookform}
\end{thm}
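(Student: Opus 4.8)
The plan is to prove the equivalent statement that the quantity $F_{\lambda}:=k!/\prod_{v\in\lambda}h_{v}$ coincides with $f^{\lambda}$ for every $\lambda\vdash k$, arguing by induction on $k$. The base case $k=0$, the empty partition, is immediate since both sides equal $1$. For the inductive step I would invoke the branching rule for standard Young tableaux: in any $T\in\operatorname{SYT}(\lambda)$ the box holding the largest entry $k$ must be a removable corner of $\lambda$ (a box $c=(r,s)\in\lambda$ with $(r,s+1)\notin\lambda$ and $(r+1,s)\notin\lambda$), and deleting it produces a standard Young tableau of the smaller shape $\lambda\setminus c$. This correspondence is a bijection, yielding $f^{\lambda}=\sum_{c}f^{\lambda\setminus c}$ with the sum taken over the removable corners $c$. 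By the induction hypothesis $f^{\lambda\setminus c}=F_{\lambda\setminus c}$, so it suffices to show that $F$ obeys the same recurrence, $F_{\lambda}=\sum_{c}F_{\lambda\setminus c}$.

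The next step is to track how hook lengths change upon removing a removable corner $c=(r,s)$. Since $c$ is a corner we have $h_{r,s}=1$, and one checks directly that removing $c$ leaves every hook length unchanged except for the boxes strictly to its left in row $r$ and strictly above it in column $s$, each of whose hook lengths drops by exactly $1$ (the arm, respectively the leg, loses a single box). A short computation then gives
\[\frac{F_{\lambda\setminus c}}{F_{\lambda}}=\frac{1}{k}\prod_{j=1}^{s-1}\frac{h_{r,j}}{h_{r,j}-1}\prod_{i=1}^{r-1}\frac{h_{i,s}}{h_{i,s}-1},\]
so the desired recurrence is equivalent to the statement that these quantities sum to $1$ over all removable corners $c$ of $\lambda$.

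Proving this summation identity is the crux of the argument, and the cleanest route is the probabilistic method of Greene, Nijenhuis and Wilf. One selects a box of $\lambda$ uniformly at random and performs a \emph{hook walk}: from the current box one moves to a box chosen uniformly at random among the remaining boxes of its hook, a process that necessarily terminates at a removable corner. The factor $1/k$ accounts for the uniform choice of starting box, and an induction on the length of the walk, conditioning on its first step, shows that the probability of terminating at a given corner $c=(r,s)$ is exactly the product displayed above. Since these are the probabilities of a complete family of mutually exclusive outcomes, they sum to $1$, which is precisely the identity needed to close the induction.

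The main obstacle is this last identity: verifying that the hook-walk termination probabilities match the product formula requires the careful conditioning argument, since the factors $h_{r,j}/(h_{r,j}-1)$ and $h_{i,s}/(h_{i,s}-1)$ do not telescope in any obvious way. Any purely algebraic alternative, for instance expanding $f^{\lambda}$ via the Frobenius determinantal formula in terms of the shifted first-column lengths $\lambda_{i}+\ell(\lambda)-i$ and then matching a partial-fraction expansion over the corners, is available but computationally heavier, which is why I would favor the hook-walk approach.
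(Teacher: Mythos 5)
Your proposal is correct: the reduction via the branching rule to the identity $\sum_{c}F_{\lambda\setminus c}/F_{\lambda}=1$, the computation of the ratio of hook products, and the hook-walk argument of Greene, Nijenhuis, and Wilf together constitute a complete and standard proof. The paper itself does not prove Theorem \ref{hookform} but quotes it from Theorem 3.10.2 of \cite{[S]}, where the proof given is precisely this probabilistic hook-walk argument, so your route coincides with the one the paper relies on.
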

This result is stated and proved as, e.g., Theorem 3.10.2 of \cite{[S]}.

Take now a partition $\lambda \vdash k$, and consider a large enough integer $n$ (in fact, $n\geq\lambda_{1}$ will suffice). We write $(n-k,\lambda)$ for the partition of $n$ that is obtained by putting a row of length $n-k$ in front of $\lambda$, namely
$(n-k,\lambda):=(n-k,\lambda_{1},\lambda_{2},\ldots,\lambda_{\ell(\lambda)}) \vdash n$. Our object of interest is $f^{(n-k,\lambda)}$ as a function of $n$.

Its first property, the first part of which also shows up as Theorem A of \cite{[R]}, is as follows.
\begin{prop}
If $\lambda \vdash k$, then there is a polynomial $p_{\lambda}\in\mathbb{Q}[x]$, of degree $k$, such that the equality $f^{(n-k,\lambda)}=p_{\lambda}(n)$ holds for any $n\gg0$. The polynomial $p_{\lambda}$ is divisible by $\prod_{i=0}^{k-\ell(\lambda)-1}(n-i)$. \label{fpol}
\end{prop}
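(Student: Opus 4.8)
The plan is to invoke the hook formula (Theorem \ref{hookform}) for the partition $(n-k,\lambda)$ and to extract the dependence on $n$ explicitly. Writing the rows of $(n-k,\lambda)$ as a first row of length $n-k$ followed by the rows of $\lambda$, I first observe that the hook of any box lying in one of the rows coming from $\lambda$ is unaffected by the presence of the long first row above it, since a hook only counts boxes weakly below and weakly to the right. Hence the product of these hooks is exactly $\prod_{v\in\lambda}h_{v}=k!/f^{\lambda}$ by the hook formula applied to $\lambda$ itself, a constant independent of $n$. It therefore remains to analyze only the $n-k$ hooks in the first row.

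For the box $(1,j)$ with $1\le j\le n-k$, the arm contributes $n-k-j$ and the leg contributes $\lambda'_{j}$, the number of parts of $\lambda$ that are $\ge j$ (the $j$-th part of the conjugate partition, with $\lambda'_{j}=0$ for $j>\lambda_{1}$), so that $h_{1,j}=n-k+1-j+\lambda'_{j}$. I would split the product over the first row at $j=\lambda_{1}$: for $\lambda_{1}<j\le n-k$ one has $\lambda'_{j}=0$, so those factors run through $n-k-\lambda_{1},n-k-\lambda_{1}-1,\ldots,1$ and multiply to $(n-k-\lambda_{1})!$, while the first $\lambda_{1}$ factors give $\prod_{j=1}^{\lambda_{1}}(n-c_{j})$ with $c_{j}:=k-1+j-\lambda'_{j}$. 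Combining everything with $n!/(n-k-\lambda_{1})!=\prod_{m=0}^{k+\lambda_{1}-1}(n-m)$, the hook formula yields
\[
f^{(n-k,\lambda)}=\frac{f^{\lambda}}{k!}\cdot\frac{\prod_{m=0}^{k+\lambda_{1}-1}(n-m)}{\prod_{j=1}^{\lambda_{1}}(n-c_{j})}.
\]

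The core of the argument is to show that this rational expression is in fact a polynomial of degree $k$, by cancelling the denominator against $\lambda_{1}$ distinct factors of the numerator. The key structural fact, which I expect to be the main point to get right, is that $j\mapsto c_{j}$ is strictly increasing: since the conjugate partition is weakly decreasing, $j-\lambda'_{j}$ strictly increases as $j$ increases by $1$. Together with the bounds $k-\ell(\lambda)=c_{1}\le c_{j}\le k+\lambda_{1}-2$ (using $1\le\lambda'_{j}\le\ell(\lambda)$ for $1\le j\le\lambda_{1}$, and $\lambda'_{1}=\ell(\lambda)$), this shows that the $c_{j}$ are $\lambda_{1}$ distinct integers lying in $\{0,1,\dots,k+\lambda_{1}-1\}$, so each denominator factor cancels a distinct numerator factor. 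What survives is $f^{\lambda}/k!$ times a product of $k$ distinct linear factors, a polynomial of degree exactly $k$ since the leading coefficient $f^{\lambda}/k!$ is nonzero; this establishes the first claim.

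Finally, for the divisibility claim I would track which factors survive the cancellation. The removed indices are exactly $\{c_{j}\}_{j=1}^{\lambda_{1}}$, all of which are $\ge c_{1}=k-\ell(\lambda)$ by monotonicity; hence every factor $(n-i)$ with $0\le i\le k-\ell(\lambda)-1$ remains untouched in the numerator. Therefore $\prod_{i=0}^{k-\ell(\lambda)-1}(n-i)$ divides $p_{\lambda}$, as asserted, the product being empty and the claim vacuous precisely when $\lambda$ is a single column. Since the hook formula computes $f^{(n-k,\lambda)}$ exactly for every $n$ with $n-k\ge\lambda_{1}$, the resulting polynomial identity $f^{(n-k,\lambda)}=p_{\lambda}(n)$ holds for all $n\gg0$.
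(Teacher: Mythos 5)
Your proposal is correct and follows essentially the same route as the paper: apply the hook formula to $(n-k,\lambda)$, observe that the hooks in the rows coming from $\lambda$ contribute the constant $k!/f^{\lambda}$, split the first-row product at $j=\lambda_{1}$ to absorb $(n-k-\lambda_{1})!$ into $n!$, and cancel the remaining $\lambda_{1}$ first-row hooks (expressed via the conjugate partition) against distinct factors of the numerator, noting that all cancelled indices are at least $k-\ell(\lambda)$. Your monotonicity argument for the $c_{j}$ is exactly the paper's chain of inequalities for the $h_{1,j}$, just written from the other side.
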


\begin{proof}
The hook formula from Theorem \ref{hookform} yields the equality \[f^{(n-k,\lambda)}=\frac{n!}{\prod_{i,j}h_{i,j}},\] where the $h_{i,j}$'s are the hook lengths of the diagram of $(n-k,\lambda)$. Note that if $i\geq2$ then the hook $H_{i,j}$ is contained in $\lambda$ (or more precisely its translation by one row down), and that all the hooks of $\lambda$ show up in this way. Therefore their product yields, by another application of Theorem \ref{hookform}, the quotient $k!/f^{\lambda}$, and we get \[f^{(n-k,\lambda)}=\frac{n!}{\prod_{ij}h_{ij}}=\frac{n! \cdot f^{\lambda}}{k!\cdot\prod_{j=1}^{n-k}h_{1,j}}.\]

Furthermore, for $j>\lambda_{1}$ we have $h_{1,j}=n-k-j+1$, so that we get the product \[\prod_{\lambda_{1}<j\leq n-k}h_{1,j}=(n-k-\lambda_{1})!,\] and canceling it with the corresponding parts of the numerator produces
\begin{equation}
f^{(n-k,\lambda)}=\frac{n\cdot(n-1)\cdot\ldots\cdot(n-k-\lambda_{1}+1) \cdot f^{\lambda}}{k!\cdot\prod_{j=1}^{\lambda_{1}}h_{1,j}}=\frac{f^{\lambda}\cdot\prod_{i=0}^{k+\lambda_{1}-1}(n-i)}{k!\cdot\prod_{j=1}^{\lambda_{1}}h_{1,j}}.
\label{quotient}
\end{equation}
 The numerator now is a constant times the product of $k+\lambda_{1}$ terms that are linear in $n$.

Finally, let $\mu_{j}$ for $1 \leq j\leq\lambda_{1}$ by the number of $\lambda_{i}$'s in $\lambda$ that equal at least $j$. Then $(\mu_{1},\ldots,\mu_{\lambda_{1}})$ is a partition of $k$ (this is the \emph{transpose partition} of $\lambda$), which is of length $\lambda_{1}$ and in which the maximal element $\mu_{1}$ is $\ell(\lambda)$. By looking at the diagram of $(n-k,\lambda)$ we deduce that for every such $j$ we have $h_{1,j}=n-k+\lambda_{1}-j+\mu_{j}$, yielding $\lambda_{1}$ integers satisfying
\[n \geq n-k+\ell(\lambda)=n-k+\mu_{1}>n-k-1+\mu_{2}>\ldots>n-k-\lambda_{1}+1+\mu_{\lambda_{1}}>n-k-\lambda_{1}.\] Their product is thus a product of $\lambda_{1}$ terms that are linear in $n$, all of which show up in the previous numerator. By canceling them out we remain with a constant times the product of $k$ terms that are linear in $n$, yielding indeed a polynomial of degree $k$ in $n$, and note that all the terms $n-i$ with $0 \leq i<k-\ell(\lambda)$ do not show up in our denominator, hence survive after this cancelation as desired. This proves the proposition.
\end{proof}

Now, many results of \cite{[R]} are established by expanding the polynomial from Proposition \ref{fpol} using the basis consisting of $p_{0}=1$ and $\{p_{h}-p_{h-1}\}_{h=1}^{k}$. The advantage of this basis is that the coefficients there are non-negative. We work with the basis $\{p_{h}\}_{h=0}^{k}$, which as we now see, in general produces sums that are based on $\ell(\lambda)+1$ elements rather than $k+1$.
\begin{prop}
Write $f^{(n-k,\lambda)}$ as $\sum_{h=0}^{k}b_{\lambda,h}p_{k}(n)=\sum_{h=0}^{k}b_{\lambda,h}\binom{n}{h}$ with coefficients $b_{\lambda,h}\in\mathbb{Q}$. Then $b_{\lambda,h}\in\mathbb{Z}$ for every $h$, and the coefficients with indices $h \leq k-\ell(\lambda)-1$ all vanish. \label{bdonindex}
\end{prop}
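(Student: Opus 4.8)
The plan is to deduce both assertions directly from the preparatory results of Section \ref{BinPol} together with Proposition \ref{fpol}. The key observation is that, by Proposition \ref{fpol}, the function $f^{(n-k,\lambda)}$ agrees for $n\gg0$ with the fixed polynomial $p_{\lambda}(x)=\sum_{h=0}^{k}b_{\lambda,h}p_{h}(x)$ of degree $k$, so the coefficients $b_{\lambda,h}$ are precisely the coordinates of $p_{\lambda}$ in the binomial basis, and every statement about them can be read off from $p_{\lambda}$ itself.

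For the integrality of the $b_{\lambda,h}$, I would note that for each large integer $n$ the value $f^{(n-k,\lambda)}$ is the dimension of a representation of $S_{n}$, hence a non-negative integer. Thus $p_{\lambda}(n)\in\mathbb{Z}$ for every $n\gg0$, and Proposition \ref{coeffsinZ} applies verbatim to yield $b_{\lambda,h}\in\mathbb{Z}$ for all $0\leq h\leq k$.

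For the vanishing of the low-index coefficients, set $t:=k-\ell(\lambda)-1$. Proposition \ref{fpol} asserts that $p_{\lambda}$ is divisible by $\prod_{i=0}^{t}(x-i)$, whose roots are exactly $x=0,1,\ldots,t$; hence $p_{\lambda}(m)=0$ for every integer $m$ with $0\leq m\leq t$. Lemma \ref{vancoeffs}, applied with this value of $t$, then converts these $t+1$ vanishing values into the vanishing of the first $t+1$ coefficients, namely $b_{\lambda,h}=0$ for all $0\leq h\leq k-\ell(\lambda)-1$, as claimed.

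Since each step is a direct invocation of an already-established result, I do not expect a genuine obstacle. The only point requiring care is the index bookkeeping: one must check that the divisibility factor contributes roots at precisely $x=0,1,\ldots,k-\ell(\lambda)-1$ and that this range matches the threshold $t$ governing Lemma \ref{vancoeffs}, so that the number of forced zeros of $p_{\lambda}$ exactly accounts for the claimed number of vanishing coefficients.
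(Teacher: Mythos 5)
Your proposal is correct and follows essentially the same route as the paper: integrality of the values plus Proposition \ref{coeffsinZ} for the first claim, and the divisibility factor from Proposition \ref{fpol} combined with Lemma \ref{vancoeffs} for the vanishing of the low-index coefficients. The index bookkeeping you flag does check out, since the roots $0,1,\ldots,k-\ell(\lambda)-1$ match exactly the threshold $t$ in Lemma \ref{vancoeffs}.
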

Indeed, Proposition \ref{fpol} and the basis property allows us to write $f^{(n-k,\lambda)}$ as such a sum, and we are interested in the properties of the coefficients.

\begin{proof}
Since $f^{(n-k,\lambda)}$ is integral for large enough $n$ (as the cardinality of a set), the first assertion follows directly from Proposition \ref{coeffsinZ}. Moveover, the substitution of $x=m$ for every integer $0 \leq m \leq k-\ell(\lambda)-1$ annihilates the product from Proposition \ref{fpol}, hence also its multiple $p_{\lambda}$. The second assertion is now a consequence of Lemma \ref{vancoeffs}. This proves the proposition.
\end{proof}

Proposition \ref{bdonindex} and the fact that $p_{h}(n)=\binom{n}{h}$ for non-negative integers $n$ allow us to write \[f^{(n-k,\lambda)}=\sum_{h=k-\ell(\lambda)}^{k}b_{\lambda,h}\binom{n}{h}=\sum_{0}^{\ell(\lambda)}b_{\lambda,k-h}\binom{n}{k-h},\] after a change of index. By setting $a_{\lambda,h}\coloneqq(-1)^{k-h}b_{\lambda,k-h}$, the latter expression becomes
\begin{equation}
f^{(n-k,\lambda)}=\sum_{h=0}^{\ell(\lambda)}(-1)^{h}a_{\lambda,h}\binom{n}{k-h}. \label{expoff}
\end{equation}
We will see in Proposition \ref{poscoeff} below that $a_{\lambda,h}>0$ for all $0 \leq h\leq\ell(\lambda)$ in this convention, hence our choice of sign.

\smallskip

The second property of the numbers $f^{\lambda}$ is the \emph{branching rule}. For stating it, we recall a cell $v=(i,j)$ of a partition $\lambda \vdash k$ is called an \emph{inner corner} of $\lambda$ if $(i+1,j)\notin\lambda$ and $(i,j+1)\notin\lambda$. It is the necessary and sufficient property for the complement of $v$ in $\lambda \setminus v$ to be a partition of $k-1$ (to express which we subtract 1 from $\lambda_{i}$ and leaving the other parts of $\lambda$ invariant), which we denote by $\lambda-v$. We will also denote \[\operatorname{IC}(\lambda)=\{v\in\lambda\;|\;v\text{ is an inner corner of }\lambda\}.\]

Using this notation we state the following result, which is given as, e.g., Lemma 2.8.2 of \cite{[S]}.
\begin{thm}[Branching Rule]
For any partition $\lambda$ we have the equality \[f^{\lambda}=\sum_{v\in\operatorname{IC}(\lambda)}f^{\lambda-v}.\] \label{branching}
\end{thm}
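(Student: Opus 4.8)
The plan is to prove the identity combinatorially, by exhibiting a bijection between $\operatorname{SYT}(\lambda)$ and the disjoint union $\bigsqcup_{v\in\operatorname{IC}(\lambda)}\operatorname{SYT}(\lambda-v)$. Since $f^{\lambda}=|\operatorname{SYT}(\lambda)|$ and the right-hand side of the asserted equality counts precisely the elements of this disjoint union, such a bijection yields the claim. The central observation driving everything is that in any standard Young tableau $T$ of shape $\lambda\vdash k$, the largest entry $k$ must occupy an inner corner of $\lambda$: if the box containing $k$ sat at position $(i,j)$ with $(i,j+1)\in\lambda$ or $(i+1,j)\in\lambda$, then the strictly increasing condition along that row or down that column would force a neighboring entry larger than $k$, which is impossible.

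First I would fix $\lambda\vdash k$ and define a map $\Phi\colon\operatorname{SYT}(\lambda)\to\bigsqcup_{v\in\operatorname{IC}(\lambda)}\operatorname{SYT}(\lambda-v)$ sending a tableau $T$ to the filling obtained by deleting the box $v$ that contains $k$. By the observation above, $v$ is a well-defined inner corner, so $\lambda-v$ is a genuine partition of $k-1$, and the remaining boxes carry exactly the entries $1,\dots,k-1$ while keeping every row and every column increasing. Hence $\Phi(T)$ lands in the summand $\operatorname{SYT}(\lambda-v)$ indexed by that specific inner corner $v$.

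Next I would construct the inverse explicitly. Given $v\in\operatorname{IC}(\lambda)$ and a tableau $T'\in\operatorname{SYT}(\lambda-v)$, I would reinstate the box $v$ and fill it with the entry $k$, producing a filling of shape $\lambda$. The only thing to verify is that the result is again standard: because $v$ is an inner corner of $\lambda$ and every other entry is at most $k-1$, adjoining $v$ with the maximal value $k$ cannot violate the increasing property in its row or its column. This recovers a tableau in $\operatorname{SYT}(\lambda)$ whose maximal entry sits at $v$, and the two constructions are visibly mutually inverse, so $\Phi$ is a bijection.

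Since the union over $v\in\operatorname{IC}(\lambda)$ is disjoint (the position of $k$ is uniquely determined by $T$), passing to cardinalities through $\Phi$ gives $f^{\lambda}=\sum_{v\in\operatorname{IC}(\lambda)}f^{\lambda-v}$. The only delicate point in the whole argument is the dual characterization of inner corners: that $k$ is \emph{forced} to occupy one, and conversely that an inner corner is exactly a box into which the maximal value can be inserted while preserving standardness. Everything else is routine bookkeeping, and I do not anticipate a genuine obstacle, since the defining order conditions on standard tableaux make both directions transparent.
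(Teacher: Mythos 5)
Your proof is correct and follows essentially the same route as the paper, which in Remark \ref{forBR} proves Theorem \ref{branching} by partitioning $\operatorname{SYT}(\lambda)$ according to the inner corner $v_{T}(k)$ occupied by the largest entry and then removing (resp.\ re-adding) that box to obtain the bijection with $\bigsqcup_{v\in\operatorname{IC}(\lambda)}\operatorname{SYT}(\lambda-v)$. No substantive differences to report.
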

We will indicate the proof of Theorem \ref{branching} in Remark \ref{forBR} below, as we will later use the same reasoning for other results.

We now show that the coefficients from Equation (\ref{expoff}) also satisfy a branching rule, similar to that from Theorem \ref{branching}. Recall that $(1^{k})$ is the column partition of $k$, consisting of $k$ rows, each with a single entry.
\begin{lem}
For $(1^{k})\neq\lambda \vdash k$ and $1 \leq h \leq k-1$, we have the equality \[a_{\lambda,h}=\sum_{v\in\operatorname{IC}(\lambda)}a_{\lambda-v,h}\]. \label{brcoeffs}
\end{lem}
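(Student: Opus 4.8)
The plan is to apply the branching rule of Theorem~\ref{branching}, not to $\lambda$ directly, but to the enlarged partition $(n-k,\lambda)\vdash n$, and then to compare coefficients in the binomial basis. First I would determine $\operatorname{IC}\big((n-k,\lambda)\big)$ for $n$ large. Since $n-k>\lambda_{1}$, the box $(1,n-k)$ at the end of the long first row is an inner corner, and deleting it gives $(n-k-1,\lambda)=\big((n-1)-k,\lambda\big)$. All remaining inner corners lie in rows $2,\ldots,\ell(\lambda)+1$, and these are precisely the inner corners of $\lambda$ shifted one row down; removing such a $v$ produces $(n-k,\lambda-v)=\big((n-1)-(k-1),\lambda-v\big)$, using $\lambda-v\vdash k-1$. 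Hence Theorem~\ref{branching}, applied for each large $n$, reads
\[f^{(n-k,\lambda)}=f^{((n-1)-k,\lambda)}+\sum_{v\in\operatorname{IC}(\lambda)}f^{((n-1)-(k-1),\lambda-v)}.\]

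Next I would insert the expansion from Equation~(\ref{expoff}) into each term, being careful about the shifts in both $n$ and $k$. The left-hand side is $\sum_{h}(-1)^{h}a_{\lambda,h}\binom{n}{k-h}$; the first term on the right is the same polynomial with $n$ replaced by $n-1$, namely $\sum_{h}(-1)^{h}a_{\lambda,h}\binom{n-1}{k-h}$; and each summand $f^{((n-1)-(k-1),\lambda-v)}$, being the polynomial attached to a partition of $k-1$ evaluated at $n-1$, equals $\sum_{h}(-1)^{h}a_{\lambda-v,h}\binom{n-1}{(k-1)-h}$. Applying Pascal's identity $\binom{n}{k-h}=\binom{n-1}{k-h}+\binom{n-1}{(k-1)-h}$ (already recorded in the proof of Proposition~\ref{coeffsinZ}) to the left-hand side, the $\binom{n-1}{k-h}$ part cancels exactly against the first term on the right, leaving
\[\sum_{h}(-1)^{h}a_{\lambda,h}\binom{n-1}{(k-1)-h}=\sum_{v\in\operatorname{IC}(\lambda)}\sum_{h}(-1)^{h}a_{\lambda-v,h}\binom{n-1}{(k-1)-h}.\]

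Finally, both sides are now written in the binomial basis in the variable $n-1$, so by the linear independence of the $p_{j}$ from Section~\ref{BinPol} I would equate the coefficient of $\binom{n-1}{(k-1)-h}$ on each side. This yields $(-1)^{h}a_{\lambda,h}=(-1)^{h}\sum_{v}a_{\lambda-v,h}$, hence exactly $a_{\lambda,h}=\sum_{v\in\operatorname{IC}(\lambda)}a_{\lambda-v,h}$ for the claimed range of $h$ (for $h=0$ this merely recovers the ordinary branching rule via $a_{\lambda,0}=f^{\lambda}$, which is why only $1\le h\le k-1$ is new). The step I expect to require the most care is the index bookkeeping at the boundary: the coefficient $a_{\mu,j}$ is nonzero only for $j\le\ell(\mu)$, so I must check that no genuine term is lost when equating. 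The hypothesis $\lambda\neq(1^{k})$ ensures $\ell(\lambda)\le k-1$, so that every binomial $\binom{n-1}{(k-1)-h}$ occurring on the left has non-negative lower index and is a bona fide basis element; for $\lambda=(1^{k})$ the top term would fall on $\binom{n-1}{-1}=0$ and the comparison would silently drop a relation. I would also note the routine point that the displayed identities hold a priori only for large $n$, but since all terms are polynomials (Proposition~\ref{fpol}) agreeing at infinitely many integers, they are genuine polynomial identities.
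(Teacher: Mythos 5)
Your proposal is correct and follows essentially the same route as the paper: identify the inner corners of $(n-k,\lambda)$, apply the branching rule to get $f^{(n-k,\lambda)}=f^{((n-1)-k,\lambda)}+\sum_{v}f^{((n-1)-(k-1),\lambda-v)}$, expand in the binomial basis, cancel via Pascal's identity, and equate coefficients using the basis property, with the hypothesis $\lambda\neq(1^{k})$ entering exactly as you say, to keep all indices in the range where the $p_{j}$ form a genuine basis. The only quibble is your parenthetical that the $h=0$ case ``recovers'' the branching rule via $a_{\lambda,0}=f^{\lambda}$: in the paper that identity (Proposition \ref{01coeffs}) is deduced \emph{from} this lemma, so it should not be cited as prior knowledge, but your argument does not actually depend on it.
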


\begin{proof}
If $n$ is large enough, then the inner corners of $(n-k,\lambda)$ are exactly the inner corners of $\lambda$ (pushed one row to the bottom), and the rightmost box on the top row. Observe that Theorem \ref{branching} and the definition of $f^{(n-k,\lambda)}$ then give
\begin{equation}
f^{(n-k,\lambda)}=f^{(n-1-k,\lambda)}+\sum_{v\in\operatorname{IC}(\lambda)}f^{(n-1-(k-1),\lambda-v)}, \label{decomf}
\end{equation}
where we wrote $n-k$ as $(n-1)-(k-1)$ because $\lambda-v \vdash k-1$ for $v\in\operatorname{IC}(\lambda)$.

Now, as $\lambda\neq(1^{k})$, the fact that $\ell(\lambda)<k$ allows us to take the sum from Equation (\ref{expoff}) up to $k-1$, with $a_{\lambda,h}=0$ for all $\ell(\lambda)<h \leq k-1$. A similar argument expresses the right hand side of Equation (\ref{decomf}) as
\[\sum_{h=0}^{k-1}(-1)^{h}a_{\lambda,h}\binom{n-1}{k-h}+\sum_{v\in\operatorname{IC}(\lambda)}\sum_{h=0}^{k-1}(-1)^{h}a_{\lambda-v,h}\binom{n-1}{k-1-h},\]
and rearranging the resulting equality gives
\begin{align*}
\sum_{v\in\operatorname{IC}(\lambda)}\sum_{h=0}^{k-1}(-1)^{h}a_{\lambda-v,h}\binom{n-1}{k-1-h} & =\sum_{h=0}^{k-1}(-1)^{h}a_{\lambda,h}\binom{n}{k-h}-\sum_{h=0}^{k-1}(-1)^{h}a_{\lambda,h}\binom{n-1}{k-h} \\ & =\sum_{h=0}^{k-1}(-1)^{h}a_{\lambda,h}\left[\binom{n}{k-h}-\binom{n-1}{k-h}\right] \\ & =\sum_{h=0}^{k-1}(-1)^{h}a_{\lambda,h}\binom{n-1}{k-1-h},
\end{align*}
by Pascal's identity. Viewing both sides as the substitution $x=n-1$ (for arbitrary large $n$) in linear combinations of the polynomials $\{p_{k-1-h}\}_{h=0}^{k-1}$, the resulting equality follows from the basis property of these polynomials. This proves the lemma.
\end{proof}
Note that the proof of Lemma \ref{brcoeffs} shows that the equality also holds for $\ell(\lambda)<h \leq k-1$ (with the extension by 0), and with it to all $h>\ell(\lambda)$, as both sides indeed vanish. This proof is closely related to that of Theorem B of \cite{[R]}.

The case $\lambda=(1^{k})$, omitted in Lemma \ref{brcoeffs}, will serve as our basic case in what follows. The result for which is as follows.
\begin{lem}
If $\lambda=(1^{k})$ then $a_{\lambda,h}=1$ for all $0 \leq h \leq k$. \label{onecol}
\end{lem}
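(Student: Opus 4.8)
The plan is to evaluate $f^{(n-k,\lambda)}$ directly for the column partition $\lambda=(1^{k})$, recognize it as a single shifted binomial polynomial, and then expand that polynomial into the basis $\{p_{j}\}$ in one stroke, reading off every coefficient $a_{\lambda,h}$ simultaneously. Since Lemma \ref{brcoeffs} explicitly excludes this $\lambda$, no recursion is available, so a direct computation is exactly what is called for here as a base case.

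First I would compute the shape count. The diagram of $(n-k,1^{k})$ is a hook, and specializing Equation (\ref{quotient}) to $\lambda=(1^{k})$ is immediate: here $\ell(\lambda)=k$, $\lambda_{1}=1$, and $f^{(1^{k})}=1$, while the only surviving top-row hook length is $h_{1,1}=n-k+\lambda_1-1+\mu_1=n$. This gives
\[f^{(n-k,1^{k})}=\frac{n\cdot(n-1)\cdots(n-k)}{k!\cdot n}=\frac{(n-1)(n-2)\cdots(n-k)}{k!}=\binom{n-1}{k},\]
which one may equally well obtain by applying the Hook Formula (Theorem \ref{hookform}) directly to the hook shape.

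Next I would establish the polynomial identity $\binom{n-1}{k}=\sum_{h=0}^{k}(-1)^{h}\binom{n}{k-h}$. This follows by iterating Pascal's identity in the form $\binom{n-1}{j}=\binom{n}{j}-\binom{n-1}{j-1}$, starting at $j=k$ and telescoping downward until it terminates at $\binom{n-1}{0}=\binom{n}{0}=1$. As this holds for all non-negative integers $n$, it is an equality of polynomials, i.e.\ $p_{\lambda}(x)=\sum_{h=0}^{k}(-1)^{h}p_{k-h}(x)$.

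Finally, I would compare this with the expansion in Equation (\ref{expoff}), which for $\lambda=(1^{k})$ runs over $0\leq h\leq\ell(\lambda)=k$. Because $\{p_{j}\}_{j=0}^{k}$ is a basis, the coefficients match term by term, forcing $(-1)^{h}a_{\lambda,h}=(-1)^{h}$ and hence $a_{\lambda,h}=1$ for every $0\leq h\leq k$. The argument involves no genuine obstacle; the only points requiring care are the bookkeeping of the alternating signs and the index shift between $h$ and $k-h$, and confirming that exactly one hook length survives in the top row of the specialized Equation (\ref{quotient}).
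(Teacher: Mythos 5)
Your proposal is correct and follows essentially the same route as the paper: specialize Equation (\ref{quotient}) to the hook shape to get $f^{(n-k,1^{k})}=\binom{n-1}{k}$, telescope Pascal's identity to obtain $\binom{n-1}{k}=\sum_{h=0}^{k}(-1)^{h}\binom{n}{k-h}$, and compare with Equation (\ref{expoff}) via the basis property. The only cosmetic difference is that you make the hook-length computation $h_{1,1}=n-k+\lambda_{1}-1+\mu_{1}=n$ explicit, which the paper simply asserts.
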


\begin{proof}
We follows the proof of Proposition \ref{fpol}. The number $f^{\lambda}$ is 1 for $\lambda=(1^{k})$, and then using the fact that $\lambda_{1}=1$ and $h_{1,1}=n$, Equation (\ref{quotient}) reduces to
\[f^{(n-k,\lambda)}=\frac{\prod_{i=0}^{k}(n-i)}{k! \cdot h_{1,1}}=\frac{\prod_{i=1}^{k}(n-i)}{k!}=\binom{n-1}{k}.\] As Pascal's identity expresses, for each $1 \leq t \leq k$, the coefficient $\binom{n-1}{t}$ as $\binom{n}{t}-\binom{n-1}{t-1}$, it follows by a simple induction that $\binom{n-1}{k}=\sum_{h=0}^{k}(-1)^{h}\binom{n}{k-h}$. Comparing this result with Equation (\ref{expoff}) using the basis property yields the asserted result.
\end{proof}

\smallskip

We already know that $a_{\lambda,h}=0$ for every $h>\ell(\lambda)$. For the other coefficients, we obtain the following result.
Using this example we can now prove the following proposition.
\begin{prop}
For every partition $\lambda \vdash k$ and $0 \leq h\leq\ell(\lambda)$ we have $a_{\lambda,h}>0$. \label{poscoeff}
\end{prop}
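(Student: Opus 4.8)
The plan is to prove positivity by induction on $k$, feeding on three facts established above: the branching relation $a_{\lambda,h}=\sum_{v\in\operatorname{IC}(\lambda)}a_{\lambda-v,h}$ for $\lambda\neq(1^{k})$ (Lemma \ref{brcoeffs}), the explicit value $a_{(1^{k}),h}=1$ (Lemma \ref{onecol}), and the vanishing $a_{\lambda,h}=0$ for $h>\ell(\lambda)$. The index $h=0$ is special and I would treat it once and for all, while the range $1\leq h\leq\ell(\lambda)$ is handled by the induction. Throughout, the point of the argument is that the branching relation expresses $a_{\lambda,h}$ as a sum of quantities already known to be $\geq 0$, so the whole difficulty is reduced to producing a single strictly positive summand.

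First I would dispose of $h=0$ directly, for every $\lambda$ and with no induction. By Proposition \ref{fpol} the polynomial $p_{\lambda}$ has degree exactly $k$, and since $f^{(n-k,\lambda)}=p_{\lambda}(n)$ is the cardinality of a nonempty set for $n\gg 0$, its leading coefficient is positive. In Equation (\ref{expoff}) the only binomial polynomial of degree $k$ is $\binom{n}{k}$, which carries the coefficient $(-1)^{0}a_{\lambda,0}=a_{\lambda,0}$; comparing top-degree terms shows that $a_{\lambda,0}$ equals $k!$ times the leading coefficient of $p_{\lambda}$, hence $a_{\lambda,0}>0$.

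For $1\leq h\leq\ell(\lambda)$ I would induct on $k$, the base case $k=1$ being $\lambda=(1)=(1^{1})$, where $a_{(1),1}=1>0$ by Lemma \ref{onecol}. In the inductive step, if $\lambda=(1^{k})$ then Lemma \ref{onecol} again gives $a_{\lambda,h}=1>0$. Otherwise $\lambda\neq(1^{k})$ forces $\ell(\lambda)\leq k-1$, so the entire range $1\leq h\leq\ell(\lambda)$ lies within the scope of Lemma \ref{brcoeffs}, which yields $a_{\lambda,h}=\sum_{v\in\operatorname{IC}(\lambda)}a_{\lambda-v,h}$ with each $\lambda-v\vdash k-1$. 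By the induction hypothesis together with the vanishing for indices exceeding the length, every summand satisfies $a_{\lambda-v,h}>0$ when $h\leq\ell(\lambda-v)$ and $a_{\lambda-v,h}=0$ otherwise; in particular all summands are $\geq 0$, and it suffices to exhibit one inner corner $v$ with $h\leq\ell(\lambda-v)$.

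The crux, and the step I expect to demand the most care, is this combinatorial claim about inner corners. Removing an inner corner decreases the length by at most one, so whenever $h\leq\ell(\lambda)-1$ every $v\in\operatorname{IC}(\lambda)$ already satisfies $\ell(\lambda-v)\geq\ell(\lambda)-1\geq h$, and the required positive summand is automatic. The delicate case is $h=\ell(\lambda)$, where I need an inner corner whose removal preserves the length, that is, one lying in a row of size at least $2$. Since $\lambda\neq(1^{k})$ we have $\lambda_{1}\geq 2$, and the box $v=(i_{0},\lambda_{1})$ with $i_{0}=\max\{i:\lambda_{i}=\lambda_{1}\}$ is an inner corner: the cell to its right is absent because row $i_{0}$ has length exactly $\lambda_{1}$, and the cell below is absent by maximality of $i_{0}$ (or because $i_{0}=\ell(\lambda)$). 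As $v$ sits in a row of length $\lambda_{1}\geq 2$, its removal leaves that row nonempty, so $\ell(\lambda-v)=\ell(\lambda)=h$ and hence $a_{\lambda-v,h}>0$ by induction. This provides the strictly positive term, making the sum positive and completing the induction.
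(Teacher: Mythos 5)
Your proof is correct and follows essentially the same route as the paper: induction on $k$ via the branching relation of Lemma \ref{brcoeffs}, with the base cases handled by Lemma \ref{onecol} and the strictly positive summand supplied by the same inner corner $(i_{0},\lambda_{1})$ in the last row of maximal length. The only genuine difference is that you dispose of $h=0$ separately by a leading-coefficient argument, which neatly avoids applying Lemma \ref{brcoeffs} outside its stated range $1\leq h\leq k-1$, whereas the paper runs $h=0$ through the same induction.
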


\begin{proof}
We argue by induction on $k$. When $k=1$ there is only one partition $\lambda=(1)$, for which Lemma \ref{onecol} implies that $a_{\lambda,0}=a_{\lambda,1}=1>0$, as desired.

So assume that the assertion holds for every partition of $k-1$, and take $\lambda \vdash k$. When $\lambda=(1^{k})$ we again apply Lemma \ref{onecol} to get $a_{\lambda,h}=1>0$ for every $0 \leq h \leq k=\ell(\lambda)$. We thus take $(1^{k})\neq\lambda \vdash k$ and an index $0 \leq h \leq \ell(\lambda)$, and apply Lemma \ref{brcoeffs}. Our induction hypothesis implies that $a_{\lambda-v,h}\geq0$ for every $v\in\operatorname{IC}(\lambda)$, with a strict inequality if $h\leq\ell(\lambda-v)$.

It thus remains to show that there is some $v$ for which $h>\ell(\lambda-v)$. For this, note that $\lambda_{1}\geq2$ (since $\lambda\neq(1^{k})$), take $i$ to be the maximal index with $\lambda_{i}=\lambda_{1}$, and set $j=\lambda_{1}$. Then $v=(i,j)$ is in $\operatorname{IC}(\lambda)$, and it is clear that $\ell(\lambda-v)=\ell(\lambda) \geq h$. Hence this summand in the expansion from Lemma \ref{brcoeffs} is strictly positive, and with it so is the entire sum $a_{\lambda,h}$. This proves the proposition.
\end{proof}

\section{The Coefficients and Counting Tableaux \label{ResTab}}

In this section we show that the coefficients from Equation (\ref{expoff}), whose positivity is ensured by Proposition \ref{poscoeff}, count certain standard Young tableaux.

\smallskip

Our first result, which is equivalent to parts $(ii)$ and $(iii)$ of Theorem D of \cite{[R]}, is as follows.
\begin{prop}
For every partition $\lambda$ we have $a_{\lambda,0}=a_{\lambda,1}=f^{\lambda}$. \label{01coeffs}
\end{prop}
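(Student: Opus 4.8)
The plan is to prove $a_{\lambda,0}=a_{\lambda,1}=f^{\lambda}$ by evaluating the polynomial identity in Equation (\ref{expoff}) at the two smallest integer arguments that isolate these coefficients, and to relate the resulting values of $f^{(n-k,\lambda)}$ back to $f^{\lambda}$ via the branching rule. First I would recall that Equation (\ref{expoff}) reads $f^{(n-k,\lambda)}=\sum_{h=0}^{\ell(\lambda)}(-1)^{h}a_{\lambda,h}\binom{n}{k-h}$, so that the coefficient of the highest binomial $\binom{n}{k}$ is $a_{\lambda,0}$ and the coefficient of $\binom{n}{k-1}$ is $-a_{\lambda,1}$. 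The key observation is that $\binom{n}{k-h}$ vanishes whenever $0\le n<k-h$, so substituting $x=k$ kills every term except the $h=0$ term, giving $p_{\lambda}(k)=a_{\lambda,0}\binom{k}{k}=a_{\lambda,0}$, and substituting $x=k-1$ kills every term except $h=0$ and $h=1$, giving $p_{\lambda}(k-1)=a_{\lambda,0}\binom{k-1}{k}-a_{\lambda,1}\binom{k-1}{k-1}=-a_{\lambda,1}$ since $\binom{k-1}{k}=0$.

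The remaining task is to identify $p_{\lambda}(k)$ and $p_{\lambda}(k-1)$ with $f^{\lambda}$ and $-f^{\lambda}$ respectively. For $x=k$ I would interpret $p_{\lambda}(k)$ as the \emph{genuine} dimension $f^{(n-k,\lambda)}$ at $n=k$: the partition $(n-k,\lambda)=(0,\lambda)$ degenerates to $\lambda$ itself (the empty first row can be dropped), so $f^{(k-k,\lambda)}=f^{\lambda}$, yielding $a_{\lambda,0}=f^{\lambda}$. The subtlety is that the polynomial identity is only guaranteed for $n\gg0$, so I must argue that $p_{\lambda}(k)$ equals this boundary dimension. A clean way to avoid the convergence issue is to use the coefficient branching rule of Lemma \ref{brcoeffs} together with the base case Lemma \ref{onecol}: induct on $k$, using $a_{\lambda,0}=\sum_{v\in\operatorname{IC}(\lambda)}a_{\lambda-v,0}$ and comparing with the ordinary branching rule $f^{\lambda}=\sum_{v\in\operatorname{IC}(\lambda)}f^{\lambda-v}$ of Theorem \ref{branching}, so that $a_{\lambda,0}=f^{\lambda}$ follows once it holds for all partitions of $k-1$. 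The base case $\lambda=(1^{k})$ gives $a_{\lambda,0}=1=f^{(1^{k})}$ by Lemma \ref{onecol}.

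For $a_{\lambda,1}$ the same inductive strategy applies verbatim, since Lemma \ref{brcoeffs} also reads $a_{\lambda,1}=\sum_{v\in\operatorname{IC}(\lambda)}a_{\lambda-v,1}$ for $\lambda\ne(1^{k})$ (the hypotheses $1\le h\le k-1$ are met as long as $k\ge2$), and the base case $a_{(1^{k}),1}=1=f^{(1^{k})}$ again comes from Lemma \ref{onecol}. Thus both $a_{\lambda,0}$ and $a_{\lambda,1}$ satisfy the identical recursion and identical base case as $f^{\lambda}$, forcing $a_{\lambda,0}=a_{\lambda,1}=f^{\lambda}$ simultaneously. The main obstacle I anticipate is the low-$k$ bookkeeping: for $k=1$ the index $h=1$ falls outside the range $1\le h\le k-1$ of Lemma \ref{brcoeffs}, so the cases $k=0,1$ must be checked directly against Lemma \ref{onecol} before the induction takes over, and one must confirm that the branching recursion for $a_{\lambda,1}$ is vacuously or directly compatible with $f^{\lambda}$ at the boundary.
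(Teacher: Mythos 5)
Your final argument---induction on $k$ with base case $\lambda=(1^{k})$ via Lemma \ref{onecol}, comparing the coefficient branching rule of Lemma \ref{brcoeffs} for $h=0,1$ with the ordinary branching rule of Theorem \ref{branching}---is exactly the paper's proof. The opening substitution sketch (evaluating at $x=k$ and $x=k-1$) is a false start whose gap you correctly identify and then sidestep, so it does not affect the validity of the argument you actually complete.
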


\begin{proof}
Also here we prove the results by induction on the number $k$ such that $\lambda \vdash k$. When $k=1$ and $\lambda=(1)$ we already saw in the proof of Proposition \ref{poscoeff} that both these coefficients equal 1, which is also the value of $f^{\lambda}$.

So let $\lambda \vdash k$, and assume that these equalities hold for every partition of $k-1$. If $\lambda=(1^{k})$ then both values are 1 (via Lemma \ref{onecol}), and we know that $f^{\lambda}=1$ as well. Otherwise the induction hypothesis shows that $a_{\lambda-v,0}=a_{\lambda-v,1}=f^{\lambda-v}$ for every $v\in\operatorname{IC}(\lambda)$, and after using Lemma \ref{brcoeffs} and Theorem \ref{branching} to express $a_{\lambda,0}$, $a_{\lambda,1}$, and $f^{\lambda}$ as the respective sums over $v\in\operatorname{IC}(\lambda)$, the result for $\lambda$ follows as well. This proves the proposition.
\end{proof}

To continue exploring the coefficients in general, we will need some notation and a few definitions. If $\lambda \vdash k$, $T\in\operatorname{SYT}(\lambda)$, and $1 \leq m \leq k$, then we write $R_{T}(m)$ and $C_{T}(m)$ for the row and column respectively of the box containing $m$ in $T$, and $v_{T}(m)$ is the box $\big(R_{T}(m),C_{T}(m)\big)$ itself. Namely, if $m$ lies in the box $(i,j)$, then $R_{T}(m)=i$, $C_{T}(m)=j$, and $v_{T}(m)=(i,j)$.

We now make the following definition.
\begin{defn}
Take $\lambda \vdash k$, an integer $0 \leq h \leq k$, and another integer $0\leq\alpha \leq k-h$. We say that a tableau $T\in\operatorname{SYT}(\lambda)$ satisfies the \emph{$(h,\alpha)$-condition} if the entries from $1+\alpha$ to $h+\alpha$ lie in increasing rows, namely we have $R_{T}(i+1+\alpha)>R_{T}(i+\alpha)$ for all $1 \leq i<h$. We then define \[\operatorname{SYT}_{h,\alpha}(\lambda):=\{T\in\operatorname{SYT}(\lambda)\;|\;T\text{ satisfies the }(h,\alpha)-\text{condition}\},\mathrm{\ and\ }f_{h,\alpha}^{\lambda}:=|\operatorname{SYT}_{h,\alpha}(\lambda)|.\] We will also use the shorthand $\operatorname{SYT}_{h}(\lambda):=\operatorname{SYT}_{h,0}(\lambda)$ (hence also $f_{h}^{\lambda}:=f_{h,0}^{\lambda}$), counting the number of tableaux in which the first $h$ rows begin with their respective row numbers. \label{SYThalpha}
\end{defn}
Note that if $h>\ell(\lambda)$ then $\operatorname{SYT}_{h,\alpha}(\lambda)=\emptyset$, since there are not enough rows to contain the numbers for the $(h,\alpha)$-condition from Definition \ref{SYThalpha} to hold. Also recall that for $h=1$, and certainly for $h=0$, the $(h,\alpha)$-condition holds for every tableau, so that $\operatorname{SYT}_{0,\alpha}(\lambda)=\operatorname{SYT}_{1,\alpha}(\lambda)$ for every $\alpha$.

To give an example, let $\lambda=(3,3,2,2)\vdash10$, where we have we have
\begin{align*}
\begin{bmatrix}1 & 2 & 3 \\ 4 & 5 & \boldsymbol{6} \\ \boldsymbol{7} & \boldsymbol{8} \\ \boldsymbol{9} & 10\end{bmatrix} & \notin\operatorname{SYT}_{4,5}(\lambda), & \begin{bmatrix}1 & 2 & \boldsymbol{6} \\ 3 & 5 & \boldsymbol{7} \\ 4 & \boldsymbol{8} \\ \boldsymbol{9} & 10\end{bmatrix}\in\operatorname{SYT}_{4,5}(\lambda), \\ \begin{bmatrix}\boldsymbol{1} & \boldsymbol{2} & \boldsymbol{3} \\ \boldsymbol{4} & 5 & 6 \\ 7 & 8 \\ 9 & 10\end{bmatrix} & \notin\operatorname{SYT}_{4}(\lambda), & \begin{bmatrix}\boldsymbol{1} & 5 & 6 \\ \boldsymbol{2} & 7 & 8 \\ \boldsymbol{3} & 9 \\ \boldsymbol{4} & 10\end{bmatrix}\in\operatorname{SYT}_{4}(\lambda).
\end{align*}

Before we relate the objects from Definition \ref{SYThalpha} with our coefficients of interest, we consider a result involving the first non-trivial case $h=2$. Recall the transpose partition that was mentioned in the proof of Proposition \ref{fpol}, which we shall denote by $\lambda^{t}$. Similarly, if $\lambda$ is filled to a tableau $T$, transposing the boxes with the numbers filling them produces a tableau of shape $\lambda^{t}$, which we denote by $T^{t}$. It is clearly standard if and only if $T$ is such.
\begin{prop}
For every partition $\lambda \vdash k$ and $0\leq\alpha \leq k-2$ we have the equality \[f_{2,\alpha}^{\lambda}+f_{2,\alpha}^{\lambda^{t}}=f^{\lambda}.\] \label{SYT2}
\end{prop}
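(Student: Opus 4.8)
The plan is to prove the identity directly, without induction, by partitioning $\operatorname{SYT}(\lambda)$ into two classes according to the relative position of the consecutive entries $\alpha+1$ and $\alpha+2$, and then using the transpose bijection to identify the second class with $\operatorname{SYT}_{2,\alpha}(\lambda^{t})$. First I would unwind Definition \ref{SYThalpha} in the case $h=2$: the $(2,\alpha)$-condition is simply $R_{T}(\alpha+2)>R_{T}(\alpha+1)$, so that $\operatorname{SYT}_{2,\alpha}(\lambda)$ consists of exactly those $T\in\operatorname{SYT}(\lambda)$ in which the box holding $\alpha+2$ sits in a strictly lower row than the box holding $\alpha+1$.

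The heart of the argument, and the step I expect to be the main (albeit elementary) obstacle, is the following dichotomy: for every $T\in\operatorname{SYT}(\lambda)$ exactly one of
\[R_{T}(\alpha+2)>R_{T}(\alpha+1) \qquad\text{or}\qquad C_{T}(\alpha+2)>C_{T}(\alpha+1)\]
holds. Writing $v_{T}(\alpha+1)=(i,j)$ and $v_{T}(\alpha+2)=(i',j')$, one sees that \emph{at least} one inequality holds: if both $i'\leq i$ and $j'\leq j$, then the box of $\alpha+2$ is weakly northwest of the box of $\alpha+1$ and distinct from it, so strict monotonicity of the entries along rows and down columns forces $\alpha+2<\alpha+1$, a contradiction. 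One also sees that they cannot both hold: if $i'>i$ and $j'>j$, then the box $(i,j')$ belongs to $\lambda$ (being weakly northwest of $(i',j')\in\lambda$), and monotonicity along row $i$ and down column $j'$ gives that the entry in box $(i,j')$ lies strictly between $\alpha+1$ and $\alpha+2$, which is impossible. Hence precisely one of the two inequalities is satisfied.

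With the dichotomy established, the rest is bookkeeping. The transpose map $T\mapsto T^{t}$ is a bijection $\operatorname{SYT}(\lambda)\to\operatorname{SYT}(\lambda^{t})$ that interchanges rows and columns, so $R_{T^{t}}(m)=C_{T}(m)$ for every $m$; consequently $T$ satisfies $C_{T}(\alpha+2)>C_{T}(\alpha+1)$ if and only if $T^{t}\in\operatorname{SYT}_{2,\alpha}(\lambda^{t})$. The dichotomy therefore splits $\operatorname{SYT}(\lambda)$ into the tableaux lying in $\operatorname{SYT}_{2,\alpha}(\lambda)$ and those whose transpose lies in $\operatorname{SYT}_{2,\alpha}(\lambda^{t})$, and these two classes are disjoint and exhaust $\operatorname{SYT}(\lambda)$. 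Counting each class yields $f^{\lambda}=f_{2,\alpha}^{\lambda}+|\operatorname{SYT}_{2,\alpha}(\lambda^{t})|=f_{2,\alpha}^{\lambda}+f_{2,\alpha}^{\lambda^{t}}$, which is the asserted equality.
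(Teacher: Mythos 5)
Your proof is correct and follows essentially the same route as the paper: establish that for each $T\in\operatorname{SYT}(\lambda)$ exactly one of $R_{T}(\alpha+2)>R_{T}(\alpha+1)$ or $C_{T}(\alpha+2)>C_{T}(\alpha+1)$ holds (using the absence of an integer strictly between $\alpha+1$ and $\alpha+2$ for exclusivity, and standardness for exhaustiveness), and then apply the transpose bijection to identify the second class with $\operatorname{SYT}_{2,\alpha}(\lambda^{t})$. This matches the paper's argument in both structure and detail.
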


\begin{proof}
Consider a tableau $T\in\operatorname{SYT}(\lambda)$. If $R_{T}(2+\alpha)>R_{T}(1+\alpha)$, then we must have the inequality $C_{T}(2+\alpha) \leq C_{T}(1+\alpha)$, since if $C_{T}(2+\alpha)>C_{T}(1+\alpha)$ as well then the box $\big(R_{T}(2+\alpha),C_{T}(1+\alpha)\big)$, or $\big(R_{T}(1+\alpha),C_{T}(2+\alpha)\big)$ would have to contain a number that is larger than $1+\alpha$ and smaller than $2+\alpha$, and there is no such integer.

Assuming now that $R_{T}(2+\alpha) \leq R_{T}(1+\alpha)$, so that the box $\big(R_{T}(2+\alpha),C_{T}(1+\alpha)\big)$ either equals $v_{T}(1+\alpha)$ or lies above it, and therefore it is filled with a number that is bounded by $1+\alpha$ (since $T$ is standard). It follows that $v_{T}(2+\alpha)$ must lie to the right of that box, so that $C_{T}(2+\alpha)>C_{T}(1+\alpha)$.

Now, transposition is a bijection between $\operatorname{SYT}(\lambda)$ and $\operatorname{SYT}(\lambda^{t})$. Moreover, it takes a table $T$ with $R_{T}(2+\alpha)>R_{T}(1+\alpha)$ and $C_{T}(2+\alpha) \leq C_{T}(1+\alpha)$ (namely an element of $\operatorname{SYT}_{2,\alpha}(\lambda)$) to $T^{t}$ with $C_{T^{t}}(2+\alpha)>C_{T^{t}}(1+\alpha)$ and $R_{T^{t}}(2+\alpha) \leq R_{T^{t}}(1+\alpha)$, while if $R_{T}(2+\alpha) \leq R_{T}(1+\alpha)$ and $C_{T}(2+\alpha)>C_{T}(1+\alpha)$ then $C_{T^{t}}(2+\alpha) \leq C_{T^{t}}(1+\alpha)$ and $R_{T^{t}}(2+\alpha)>R_{T^{t}}(1+\alpha)$, so that $T^{t}\in\operatorname{SYT}_{2,\alpha}(\lambda^{t})$.

In total, the set $\operatorname{SYT}(\lambda)$, of cardinality $f^{\lambda}$, is the disjoint union of $\operatorname{SYT}_{2,\alpha}(\lambda)$, of cardinality $f_{2,\alpha}^{\lambda}$, and a complement which is in bijection with the set $\operatorname{SYT}_{2,\alpha}(\lambda^{t})$, of cardinality $f_{2,\alpha}^{\lambda^{t}}$, and the required equality follows. This proves the proposition.
\end{proof}

\begin{cor}
If $k\geq2$ and $\lambda$ is invariant under transposition, then $f^{\lambda}$ is even as the union of pairs of transposed tableaux, and every choice of $\alpha$ picks up one representative from each such pair. \label{symev}
\end{cor}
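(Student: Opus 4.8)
The plan is to exploit the fact that, when $\lambda=\lambda^{t}$, the transposition map $T\mapsto T^{t}$ used in the proof of Proposition \ref{SYT2} carries $\operatorname{SYT}(\lambda)$ into itself, and to combine this with the dichotomy established there. First I would recall that in that proof every $T\in\operatorname{SYT}(\lambda)$ falls into exactly one of two mutually exclusive classes: either $R_{T}(2+\alpha)>R_{T}(1+\alpha)$, which (forcing $C_{T}(2+\alpha)\leq C_{T}(1+\alpha)$) is precisely the condition $T\in\operatorname{SYT}_{2,\alpha}(\lambda)$, or else $R_{T}(2+\alpha)\leq R_{T}(1+\alpha)$. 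Since $R_{T^{t}}(m)=C_{T}(m)$ and $C_{T^{t}}(m)=R_{T}(m)$ for every $m$, transposition interchanges these two classes.

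The key observation is then that for a self-conjugate $\lambda$ each orbit $\{T,T^{t}\}$ of the transposition involution contains exactly one member of $\operatorname{SYT}_{2,\alpha}(\lambda)$. In particular $T$ and $T^{t}$ always lie in different classes, so $T\neq T^{t}$ and the involution is fixed-point-free; hence $\operatorname{SYT}(\lambda)$ is genuinely a disjoint union of two-element pairs $\{T,T^{t}\}$, which forces $f^{\lambda}$ to be even. Moreover, for any admissible $\alpha$ the set $\operatorname{SYT}_{2,\alpha}(\lambda)$ selects precisely one representative from each pair, which is exactly the asserted natural choice. Numerically this is the identity $f^{\lambda}=2f_{2,\alpha}^{\lambda}$, which also drops out of Proposition \ref{SYT2} upon setting $\lambda^{t}=\lambda$; here the hypothesis $k\geq 2$ is what guarantees the existence of an admissible $\alpha$ in the range $0\leq\alpha\leq k-2$.

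I do not expect a serious obstacle, since the corollary is essentially a repackaging of Proposition \ref{SYT2} in the self-conjugate case. The single point that deserves a clean sentence is the fixed-point-freeness of transposition, namely that no $T$ satisfies $T=T^{t}$ when $k\geq 2$. The quickest route is the dichotomy above, as a single tableau cannot belong to both classes simultaneously; alternatively one may argue directly that a self-conjugate partition of $k\geq 2$ possesses an off-diagonal box $(i,j)$ with $i\neq j$, whose entry would then be forced to coincide with the distinct entry in the box $(j,i)$, an impossibility. Either formulation delivers the pairing and the evenness of $f^{\lambda}$, and the transversal property of $\operatorname{SYT}_{2,\alpha}(\lambda)$ then completes the argument.
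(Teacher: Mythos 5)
Your proof is correct and follows essentially the same route as the paper's: both specialize Proposition \ref{SYT2} to the case $\lambda=\lambda^{t}$ to obtain $f^{\lambda}=2f_{2,\alpha}^{\lambda}$, and both use the dichotomy from its proof to exhibit $\operatorname{SYT}_{2,\alpha}(\lambda)$ as a transversal of the transposition orbits. Your explicit check that the involution is fixed-point-free (because $T$ and $T^{t}$ always land in opposite classes of the dichotomy) is a detail the paper leaves implicit, and is a welcome clarification rather than a departure.
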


\begin{proof}
After we fix $0\leq\alpha \leq k-2$, Proposition \ref{SYT2} expresses $f^{\lambda}$ as the sum of $f_{2,\alpha}^{\lambda}$ and $f_{2,\alpha}^{\lambda^{t}}$. When $\lambda=\lambda^{t}$, these numbers coincide, so that $f^{\lambda}=2f_{2,\alpha}^{\lambda}$ is indeed even, and the proof of Proposition \ref{SYT2} decomposes $\operatorname{SYT}(\lambda)$ as the disjoint union of $\operatorname{SYT}_{2,\alpha}(\lambda)$ and its image under transposition. The asserted choice picks, from every pair, the element of the pair that lies in $\operatorname{SYT}_{2,\alpha}(\lambda)$. This proves the corollary.
\end{proof}
The fact that $f^{\lambda}$ is even in Corollary \ref{symev} follows immediately from the fact that transposition is an operation of order 2 which acts on it faithfully, but the corollary also produces, for every $\alpha$, a choice of a representative of each orbit. Note that the condition $k\geq2$ is necessary for such $\alpha$ to exist, and indeed the unique partition $\lambda=(1)$ of $k=1$ is symmetric the corresponding $f^{\lambda}=1$ is odd. We will later see that the numbers showing up in Proposition \ref{SYT2} and Corollary \ref{symev} are independent of the choice of $0\leq\alpha \leq k-2$.

\smallskip

We now turn back to properties of $f_{h,\alpha}^{\lambda}$ for general $h$ and $\alpha$. Some proofs will be emulating that of Theorem \ref{branching} in \cite{[S]}, which we now explain for fixing some ideas and notation.
\begin{rem}
Using our notation, we can indicate how Theorem \ref{branching} is proved. When $\lambda \vdash k$ and $T\in\operatorname{SYT}(\lambda)$, we must have $v_{T}(k)\in\operatorname{IC}(\lambda)$, so that if we set \[\operatorname{SYT}^{v}(\lambda):=\{T\in\operatorname{SYT}(\lambda)\;|\;v_{T}(k)=v\}\] for every $v\in\operatorname{IC}(\lambda)$ then we immediately get $\operatorname{SYT}(\lambda)=\bigsqcup_{v\in\operatorname{IC}(\lambda)}\operatorname{SYT}^{v}(\lambda)$ (the union being disjoint since $v_{T}(k)$ determines the index). The fact that removing $v$ and $k$ yields a bijection between $\operatorname{SYT}^{v}(\lambda)$ and $\operatorname{SYT}(\lambda-v)$, with the inverse being adding $v$ (which is an \emph{outer corner} of $\lambda-v$, making this operation legitimate for staying in the realm of Young diagrams) and putting $k$ into it, immediately yields Theorem \ref{branching}. \label{forBR}
\end{rem}

We will now apply the ideas from Remark \ref{forBR} to obtain the branching rule for the numbers counting the objects from Definition \ref{SYThalpha}.
\begin{lem}
For $\lambda \vdash k$, $0 \leq h\leq\ell(\lambda)$, and $0\leq\alpha<k-h$, we have the equality \[f_{h,\alpha}^{\lambda}=\sum_{v\in\operatorname{IC}(\lambda)}f_{h,\alpha}^{\lambda-v}.\] \label{halphabr}
\end{lem}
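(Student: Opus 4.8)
The plan is to mimic the proof of the branching rule as laid out in Remark \ref{forBR}, decomposing $\operatorname{SYT}_{h,\alpha}(\lambda)$ according to the location of the box containing the largest entry $k$. For each inner corner $v \in \operatorname{IC}(\lambda)$ I would set $\operatorname{SYT}_{h,\alpha}^{v}(\lambda) := \{T \in \operatorname{SYT}_{h,\alpha}(\lambda) \mid v_{T}(k) = v\}$, so that $\operatorname{SYT}_{h,\alpha}(\lambda) = \bigsqcup_{v \in \operatorname{IC}(\lambda)} \operatorname{SYT}_{h,\alpha}^{v}(\lambda)$, the union being disjoint exactly as before. The goal is then to show that the ``remove $k$ and its box $v$'' map restricts to a bijection between $\operatorname{SYT}_{h,\alpha}^{v}(\lambda)$ and $\operatorname{SYT}_{h,\alpha}^{}(\lambda - v)$, with inverse given by adding $v$ back and inserting $k$ into it. Summing the resulting cardinality equalities over all $v \in \operatorname{IC}(\lambda)$ yields the stated identity.

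The key point that makes this work is that the $(h,\alpha)$-condition from Definition \ref{SYThalpha} only constrains the entries $1+\alpha, \ldots, h+\alpha$, and the hypothesis $\alpha < k - h$ guarantees that $h + \alpha \leq k - 1 < k$, so the largest entry $k$ is strictly above the window of constrained entries. First I would verify that the underlying bijection $\operatorname{SYT}^{v}(\lambda) \leftrightarrow \operatorname{SYT}(\lambda - v)$ from Remark \ref{forBR} respects the condition in both directions: removing the box containing $k$ leaves the positions of all entries $1, \ldots, k-1$ unchanged, and in particular leaves $R_{T}(i+\alpha)$ unchanged for $1 \leq i \leq h$ (since each such index is at most $h + \alpha \leq k-1$). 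Hence the comparisons $R_{T}(i+1+\alpha) > R_{T}(i+\alpha)$ defining the condition are preserved verbatim, so $T \in \operatorname{SYT}_{h,\alpha}(\lambda)$ if and only if its image lies in $\operatorname{SYT}_{h,\alpha}(\lambda - v)$. The inverse direction is symmetric: inserting $k$ into the reinstated corner $v$ does not move any of the entries $1+\alpha, \ldots, h+\alpha$, so it cannot affect whether the condition holds.

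The main obstacle, modest as it is, is the careful bookkeeping of indices to confirm that the window of constrained entries never touches the value $k$; this is precisely where the strict inequality $\alpha < k - h$ (rather than $\alpha \leq k - h$) is used, and it is worth stating explicitly to make the argument airtight. A secondary subtlety is the edge case $h = \ell(\lambda)$ together with the inner corner structure: when $h$ equals the number of rows, the condition forces $i + \alpha$ to occupy row $i$ for $1 \leq i \leq h$ up to the relevant increasing-row pattern, and one should check that removing $k$ cannot collapse a row needed to house one of these entries. But since $k$ sits strictly above the constrained window and the shape $\lambda - v$ still has $\ell(\lambda)$ rows available for those entries (removing an inner corner $v$ only shortens a row by one box or deletes a length-one bottom row whose single entry would be $k$ itself in the relevant configuration), the condition transfers without loss. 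Once these index checks are in place, the decomposition and the two-sided inverse give the bijection, and the lemma follows by summing over $\operatorname{IC}(\lambda)$.
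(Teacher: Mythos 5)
Your proposal follows the paper's proof essentially verbatim: the same decomposition of $\operatorname{SYT}_{h,\alpha}(\lambda)$ by the position of $k$ as in Remark \ref{forBR}, and the same observation that $\alpha<k-h$ forces $h+\alpha\leq k-1$, so the $(h,\alpha)$-condition never involves $k$ and is preserved by the removal/insertion bijection. The extra worry about the edge case $h=\ell(\lambda)$ is harmless but unnecessary, since the bijection argument compares cardinalities directly without needing the restricted sets to be nonempty.
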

In fact, Lemma \ref{halphabr} holds for every $h$, since when $\ell(\lambda)<h<k$ both sides vanish (and for $h \geq k$ the set of $\alpha$'s is empty).

\begin{proof}
As in Remark \ref{forBR}, consider $v\in\operatorname{IC}(\lambda)$, and set \[\operatorname{SYT}_{h,\alpha}^{v}(\lambda):=\{T\in\operatorname{SYT}_{h,\alpha}(\lambda)\;|\;v_{T}(k)=v\},\] to again obtain the description $\operatorname{SYT}_{h,\alpha}(\lambda)=\bigsqcup_{v\in\operatorname{IC}(\lambda)}\operatorname{SYT}_{h,\alpha}^{v}(\lambda)$ as a disjoint union. Now, when $1 \leq h<k$ and $0\leq\alpha<k-h$, the $(h,\alpha)$-condition from Definition \ref{SYThalpha} does not involve the number $k$. Therefore restricting the bijection between $\operatorname{SYT}^{v}(\lambda)$ and $\operatorname{SYT}(\lambda-v)$, as described in Remark \ref{forBR}, to $\operatorname{SYT}_{h,\alpha}^{v}(\lambda)$ preserves the condition in question on both sides, and reduces to a bijection between the latter set and $\operatorname{SYT}_{h,\alpha}(\lambda-v)$. Gathering these together, we obtain \[|\operatorname{SYT}_{h,\alpha}(\lambda)|=\left|\bigsqcup_{v\in\operatorname{IC}(\lambda)}\operatorname{SYT}_{h,\alpha}^{v}(\lambda)\right|= \sum_{v\in\operatorname{IC}(\lambda)}|\operatorname{SYT}_{h,\alpha}^{v}(\lambda)|=\sum_{v\in\operatorname{IC}(\lambda)}|\operatorname{SYT}_{h,\alpha}(\lambda-v)|,\]
which is precisely the desired equality. This proves the lemma.
\end{proof}

We can now state and prove our first main result.
\begin{thm}
For every $\lambda \vdash k$ and any $0 \leq h\leq\ell(\lambda)$, the coefficient $a_{\lambda,h}$ from Equation (\ref{expoff}) equals the cardinality $f_{h}^{\lambda}$ of $\operatorname{SYT}_{h}(\lambda)$ from Definition \ref{SYThalpha}. \label{coeffSYT}
\end{thm}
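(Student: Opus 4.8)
The plan is to prove the identity $a_{\lambda,h}=f_{h}^{\lambda}$ by induction on $k=|\lambda|$, by exhibiting the two quantities as solutions of the \emph{same} recursion with the \emph{same} boundary data. The crucial observation is that Lemma \ref{brcoeffs} and Lemma \ref{halphabr} (the latter specialized to $\alpha=0$) furnish identical branching recursions over the inner corners of $\lambda$; once the initial data are shown to agree, the two families of numbers are forced to coincide everywhere.

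First I would dispose of the indices lying outside the genuine recursion. For $h>\ell(\lambda)$ both sides vanish: $a_{\lambda,h}=0$ has already been recorded, and $\operatorname{SYT}_{h}(\lambda)=\emptyset$ as noted after Definition \ref{SYThalpha}, so $f_{h}^{\lambda}=0$. For $h=0$ and $h=1$ the $(h,0)$-condition is vacuous, whence $f_{h}^{\lambda}=f^{\lambda}$, while Proposition \ref{01coeffs} gives $a_{\lambda,0}=a_{\lambda,1}=f^{\lambda}$; these cases therefore need no induction. It remains to treat $2\le h\le\ell(\lambda)$.

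For the induction the base case $k=1$ is immediate (only $\lambda=(1)$, already handled above), and in the inductive step I separate the column partition from the rest. If $\lambda=(1^{k})$, then Lemma \ref{onecol} gives $a_{\lambda,h}=1$, while the unique standard filling of $(1^{k})$ places $i$ in row $i$ and hence satisfies every $(h,0)$-condition, so $f_{h}^{(1^{k})}=1$ as well. If $(1^{k})\neq\lambda\vdash k$, then $\ell(\lambda)<k$, so any $h\le\ell(\lambda)$ satisfies both $1\le h\le k-1$ and $0<k-h$; thus Lemma \ref{brcoeffs} yields $a_{\lambda,h}=\sum_{v\in\operatorname{IC}(\lambda)}a_{\lambda-v,h}$, and Lemma \ref{halphabr} with $\alpha=0$ yields $f_{h}^{\lambda}=\sum_{v\in\operatorname{IC}(\lambda)}f_{h}^{\lambda-v}$. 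Since each $\lambda-v\vdash k-1$, the induction hypothesis gives $a_{\lambda-v,h}=f_{h}^{\lambda-v}$ term by term, and summing over $v\in\operatorname{IC}(\lambda)$ closes the induction.

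The one point requiring care—and where I expect the only subtlety to lie—is the bookkeeping at the boundary of the index range: for a particular inner corner $v$ one may have $h>\ell(\lambda-v)$, in which case the matching identity $a_{\lambda-v,h}=f_{h}^{\lambda-v}$ is an equality of zeros rather than a direct instance of the theorem. I would therefore carry the induction hypothesis in its extended form, valid for all $h$ with both sides vanishing once $h$ exceeds the length, exactly as in the remarks accompanying Lemma \ref{brcoeffs} and Lemma \ref{halphabr}. This guarantees that every summand on each side is covered and that the two recursions can indeed be matched term by term, after which the identity $a_{\lambda,h}=f_{h}^{\lambda}$ for $0\le h\le\ell(\lambda)$ follows.
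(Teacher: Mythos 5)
Your proposal is correct and follows essentially the same route as the paper: induction on $k$ with the column partition $(1^k)$ as the special case via Lemma \ref{onecol}, matching the branching recursions of Lemma \ref{brcoeffs} and Lemma \ref{halphabr} (at $\alpha=0$) term by term, and extending the induction hypothesis by zeros when $h>\ell(\lambda-v)$. The only cosmetic difference is that you peel off $h=0,1$ via Proposition \ref{01coeffs}, whereas the paper runs the same induction uniformly over all $h$.
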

Also in Theorem \ref{coeffSYT}, the assertion extends to $h>\ell(\lambda)$ as well, since we saw in Equation (\ref{expoff}) that $a_{\lambda,h}=0$ for such $h$, and as already mentioned above, it follows directly from Definition \ref{SYThalpha} that $\operatorname{SYT}_{h}(\lambda)$ is empty in that case, hence $f_{h}^{\lambda}=0$ as well.

\begin{proof}
We first consider the case where $\lambda=(1^{k})$. Then $\operatorname{SYT}(\lambda)$ consists of a single tableau, which clearly satisfies that $(h,0)$-condition (and, in fact, the $(h,\alpha)$-condition for every $0\leq\alpha \leq k-h$) from Definition \ref{SYThalpha}, so that $f_{h}^{\lambda}=1$ for every $0 \leq h \leq k$. As Lemma \ref{onecol} implies that $a_{\lambda,h}=1$ for every such $h$, the asserted equality follows in this case.

This proves, in particular, the result when $k=1$ as above, so we have the base case for arguing by induction over $k$. Assume that the result holds for every partition of $k-1$ and every $h$, and take some partition $\lambda \vdash k$, which we may assume to be different from $(1^{k})$ as this case was already established. We can also assume that $0 \leq h\leq\ell(\lambda)<k$, since we noted already that otherwise the equality holds in a trivial manner.

Now, the fact that $\lambda\neq(1^{k})$ allows us to invoke Lemma \ref{brcoeffs}, and we can apply the induction hypothesis to all its summands (indeed, while the bound $h\leq\ell(\lambda)$ might not be the one applied for $\lambda-v$ in some cases, the induction hypothesis still applies as an equality of zeros in the remaining case). It follows that $h<k$, and then $\alpha=0<k-h$, meaning that the conditions of Lemma \ref{halphabr} are also satisfied. Altogether we obtain the equalities \[a_{\lambda,h}=\sum_{v\in\operatorname{IC}(\lambda)}a_{\lambda-v,h}=\sum_{v\in\operatorname{IC}(\lambda)}f_{h}^{\lambda-v}=f_{h}^{\lambda},\] as desired. This proves the theorem.
\end{proof}
In fact, by changing the roles of the numbers we can take an arbitrary $\lambda=(\lambda_{1},\ldots,\lambda_{\ell(\lambda)}) \vdash k$, and set $\mu:=(\lambda_{2},\ldots,\lambda_{\ell(\lambda)})$, so that $\lambda$ can be written as $(\lambda_{1},\mu)$. Using Equation (\ref{expoff}) and Theorem \ref{coeffSYT}, we can express $f^{\lambda}$ as \[f^{\lambda}=f^{(\lambda_{1},\mu)}=\sum_{h=0}^{\ell(\lambda)-1}(-1)^{h}a_{\mu,h}\binom{k}{k-h}=\sum_{h=0}^{\ell(\lambda)-1}(-1)^{h}f_{h}^{\mu}\binom{k}{k-h}.\]

\section{Bijections Between Different Values of $\alpha$ \label{Bijections}}

In the first draft of this paper, we could compare, in a more complicated manner, the coefficient $a_{\lambda,h}$ from Equation (\ref{expoff}) with the number $f_{h,k-h}^{\lambda}$. It was then brought to our attention that $f_{h,\alpha}^{\lambda}$ is independent of $\alpha$ (as long as $\lambda \vdash k \geq h$ and $0\leq\alpha \leq k-h$), and thus the comparison of $a_{\lambda,h}$ with $f_{h}^{\lambda}=f_{h,0}^{\lambda}$ was much easier. In this section we prove this independence of $\alpha$ by constructing explicit bijections between $\operatorname{SYT}_{h,\alpha}(\lambda)$ and $\operatorname{SYT}_{h,\alpha-1}(\lambda)$ for every $0\leq\alpha<h-k$.

\smallskip

The naive approach for attempting to prove this fact would be to apply induction via Lemma \ref{halphabr} for obtaining an equality of the sort
\[f_{h,\alpha}^{\lambda}=\sum_{v\in\operatorname{IC}(\lambda)}f_{h,\alpha}^{\lambda-v}=\sum_{v\in\operatorname{IC}(\lambda)}f_{h,\alpha-1}^{\lambda-v}=f_{h,\alpha-1}^{\lambda}.\] However, Lemma \ref{halphabr} makes the assumption that $\alpha<k-h$, so that such an argument does not work when $\alpha=k-h$, or equivalently $k=h+\alpha$. Our proof will indeed be inductive of this sort, but after we make the necessary constructions for the base case $k=h+\alpha$. Once we show that if $f_{h,\alpha}^{\lambda}=f_{h,\alpha-1}^{\lambda}$ for $\lambda \vdash k=h+\alpha$, the inductive argument above will establish the result for all $\lambda \vdash k>h+\alpha$.

We begin with the following construction.
\begin{defn}
Assume that $h>0$ and $\alpha>0$ are integers, and take a partition $\lambda \vdash h+\alpha$. We define the map $\varphi_{\lambda,h,\alpha}$ from $\operatorname{SYT}_{h,\alpha}(\lambda)$ to tableaux of shape $\lambda$ as follows. If $T\in\operatorname{SYT}_{h,\alpha}(\lambda)$ also lies in $\operatorname{SYT}_{h,\alpha-1}(\lambda)$, namely $R_{T}(\alpha)<R_{T}(1+\alpha)$, then set $\varphi_{\lambda,h,\alpha}(T):=T$. Otherwise the set of $1 \leq i \leq h$ such that $R_{T}(\alpha) \geq R_{T}(i+\alpha)$ is non-empty, and we can define
\[q:=\max\{1 \leq i \leq h\;|\;R_{T}(i+\alpha) \leq R_{T}(\alpha)\}.\] With this $q$ we define $\varphi_{\lambda,h,\alpha}(T)$ as the tableau of shape $\lambda$ in which for $1 \leq m \leq h+\alpha$ we have
\begin{align*}
v_{\varphi_{\lambda,h,\alpha}(T)}(m) & =\begin{cases} v_{T}(m) & 1 \leq m\leq\alpha-1, \\ v_{T}(m+1) & \alpha \leq m \leq q-2+\alpha\mathrm{\ \ or\ \ }q+\alpha \leq m \leq h-1+\alpha, \\ v_{T}(\alpha) & m=q-1+\alpha, \\ v_{T}(q+\alpha) & m=h+\alpha. \end{cases}
\end{align*} \label{mapredalpha}
\end{defn}
Note that each of the boxes $v_{T}(m)$ with $1 \leq m \leq h+\alpha$ shows up once in Definition \ref{mapredalpha}, so that $\varphi_{\lambda,h,\alpha}(T)$ is indeed a tableau of shape $\lambda$, but at the moment we do not know yet that it is in $\operatorname{SYT}_{h,\alpha-1}(\lambda)$, or even standard. While this is clear if $T$ also lies in $\operatorname{SYT}_{h,\alpha-1}(\lambda)$, in the other cases we will use the following observation.
\begin{lem}
Given $h>0$, $\alpha>0$, $\lambda \vdash h+\alpha$, and $T\in\operatorname{SYT}_{h,\alpha}(\lambda)\setminus\operatorname{SYT}_{h,\alpha-1}(\lambda)$, let $q$ be as in Definition \ref{mapredalpha}. Then $v_{T}(q+\alpha)$ is an inner corner of $\lambda$. \label{qalphaIC}
\end{lem}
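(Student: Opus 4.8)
The plan is to write $(r,c):=v_{T}(q+\alpha)$ and verify directly the two conditions defining an inner corner, namely that neither $(r,c+1)$ nor $(r+1,c)$ belongs to $\lambda$. The whole argument rests on the defining feature of the base case $k=h+\alpha$: the entries strictly larger than $q+\alpha$ are \emph{exactly} $q+1+\alpha,\ldots,h+\alpha$, and by the $(h,\alpha)$-condition these occupy strictly increasing rows, each lying strictly below row $r=R_{T}(q+\alpha)$. I would also record at the outset that $R_{T}(q+\alpha)\leq R_{T}(\alpha)$ by the definition of $q$, and that the maximality of $q$ gives $R_{T}(q+1+\alpha)>R_{T}(\alpha)$ whenever $q<h$.

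For the box to the right the argument is immediate. If $(r,c+1)\in\lambda$, then since $T$ is standard its entry exceeds $q+\alpha$, hence equals some $i+\alpha$ with $q<i\leq h$; but the $(h,\alpha)$-condition forces $R_{T}(i+\alpha)>r$, contradicting that this box sits in row $r$. (When $q=h$ this is vacuous, since then $q+\alpha=k$ and no larger entry exists, so the box directly to the right cannot be present.)

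The box directly below $(r,c)$ is the step I expect to be the main obstacle, and it is where the maximality of $q$ is genuinely used. Supposing $(r+1,c)\in\lambda$, its entry again exceeds $q+\alpha$; because the rows $R_{T}(q+\alpha)<R_{T}(q+1+\alpha)<\cdots$ increase by at least one at each step, the only value landing in row $r+1$ is $q+1+\alpha$, so that box holds $q+1+\alpha$ and $q<h$. The maximality of $q$ then yields $r+1=R_{T}(q+1+\alpha)>R_{T}(\alpha)\geq R_{T}(q+\alpha)=r$, which pins down $R_{T}(\alpha)=r$; thus $\alpha$ lies in row $r$, and being smaller than $q+\alpha$ it sits strictly to its left, $C_{T}(\alpha)<c$.

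The contradiction now comes from the left-justification of Young diagrams. Since $C_{T}(\alpha)<c$ and $(r+1,c)\in\lambda$, the box $(r+1,C_{T}(\alpha))$ also lies in $\lambda$. Its entry is larger than $\alpha$ (it sits below $\alpha$) and smaller than the entry $q+1+\alpha$ to its right, hence it is one of $1+\alpha,\ldots,q+\alpha$; but every such entry lies in a row at most $r$ by the $(h,\alpha)$-condition, whereas this box is in row $r+1$. This contradiction rules out a box below $(r,c)$, and together with the previous paragraph it shows that $v_{T}(q+\alpha)\in\operatorname{IC}(\lambda)$, as claimed.
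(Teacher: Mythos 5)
Your proposal is correct and follows essentially the same route as the paper: reduce to showing there is no box below $v_T(q+\alpha)$, observe that only $q+1+\alpha$ could occupy that box, use the maximality of $q$ to force $R_T(\alpha)=R_T(q+\alpha)$, and derive a contradiction from the box $(r+1,C_T(\alpha))$, which must be simultaneously larger than $\alpha$ and smaller than $q+1+\alpha$ yet cannot hold any of $1+\alpha,\ldots,q+\alpha$. The only cosmetic difference is that you verify the ``no box to the right'' condition by a direct entry count, where the paper notes that $q+\alpha$ is the largest entry in its row.
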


\begin{proof}
First we note that as $T\in\operatorname{SYT}_{h,\alpha}(\lambda)$, all the numbers that are larger than $q+\alpha$ show up in lower rows, meaning that this number is the largest in its row and thus fills its rightmost box (this is independent of the value of $q$, and thus holds for $q+\alpha$ for every $1 \leq i \leq h$). It is thus an inner corner of $\lambda$ if and only if there is no box just below it, namely if and only if $\big(R_{T}(q+\alpha)+1,C_{T}(q+\alpha)\big)\not\in\lambda$.

So assume that $\big(R_{T}(q+\alpha)+1,C_{T}(q+\alpha)\big)\in\lambda$, and it thus must be filled, in the standard tableau $T$, by a number that is larger than $q+\alpha$. Since any number that is larger than $q+1+\alpha$ must be in a row below $R_{T}(q+1+\alpha)>R_{T}(q+\alpha)$, it cannot be in the row $R_{T}(q+\alpha)+1$, and thus the only option for this box is to contain the number $q+1+\alpha$.

But recall that $R_{T}(\alpha) \geq R_{T}(q+\alpha)$, and $q$ is maximal with that property. If this inequality is strict then we obtain $R_{T}(\alpha) \geq R_{T}(q+\alpha)+1=R_{T}(q+1+\alpha)$, which contradicts the maximality of $q$, so we must have If $R_{T}(\alpha)=R_{T}(q+\alpha)$. Then we have $C_{T}(\alpha)<C_{T}(q+\alpha)$ (as $T$ is standard and they are in the same row), and consider the box $\big(R_{T}(q+\alpha)+1,C_{T}(\alpha)\big)\in\lambda$. It lies below $v_{T}(\alpha)$, thus must be filled with a larger entry. But it also lies to the left of $v_{T}(q+1+\alpha)$, so must be filled with a smaller entry. But $q+\alpha$ lies in an upper row, and the $(h,\alpha)$-condition on $T$ implies that all the numbers between $1+\alpha$ and $q+\alpha$ lie in rows that are above $R_{T}(q+\alpha)+1$. Thus we cannot fill such $T$ to be a standard tableau, and from this contradiction we obtain that $\big(R_{T}(q+\alpha)+1,C_{T}(q+\alpha)\big)\not\in\lambda$ as desired. This proves the lemma.
\end{proof}

We can now prove that $\varphi_{\lambda,h,\alpha}$ has the desired property.
\begin{prop}
With $h$, $\alpha$, and $\lambda$ as above, $\varphi_{\lambda,h,\alpha}$ takes every $T\in\operatorname{SYT}_{h,\alpha}(\lambda)$ to an element of $\operatorname{SYT}_{h,\alpha-1}(\lambda)$. \label{decalpha}
\end{prop}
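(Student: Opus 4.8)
The plan is to dispose of the trivial case first and then verify the two defining requirements of $\operatorname{SYT}_{h,\alpha-1}(\lambda)$ separately: that $\varphi_{\lambda,h,\alpha}(T)$ is standard, and that it satisfies the $(h,\alpha-1)$-condition. If $T$ already lies in $\operatorname{SYT}_{h,\alpha-1}(\lambda)$ then $\varphi_{\lambda,h,\alpha}(T)=T$ and there is nothing to check, so I assume $T\in\operatorname{SYT}_{h,\alpha}(\lambda)\setminus\operatorname{SYT}_{h,\alpha-1}(\lambda)$, let $q$ be as in Definition \ref{mapredalpha}, and write $U:=\varphi_{\lambda,h,\alpha}(T)$. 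The first thing I would record is the chain of row inequalities forced by the $(h,\alpha)$-condition together with the maximality of $q$: since $R_T(1+\alpha)<\cdots<R_T(h+\alpha)$ and $q$ is the largest index with $R_T(q+\alpha)\leq R_T(\alpha)$, we have
\[R_T(1+\alpha)<\cdots<R_T(q+\alpha)\leq R_T(\alpha)<R_T(q+1+\alpha)<\cdots<R_T(h+\alpha),\]
the final block being present only when $q<h$. These inequalities are the engine for everything that follows.

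For standardness I would invoke the standard criterion that a bijective filling of $\lambda$ is a standard Young tableau if and only if, for every $m$, the set $D_m$ of boxes carrying the values $1,\ldots,m$ is a Young subdiagram of $\lambda$. Reading off Definition \ref{mapredalpha}, the boxes of $U$ agree with those of $T$ on the values $1,\ldots,\alpha-1$, so $D_m=\{v_T(1),\ldots,v_T(m)\}$ for $m\leq\alpha-1$; for the two shifted blocks one computes $D_m=\{v_T(1),\ldots,v_T(m+1)\}\setminus\{v_T(\alpha)\}$ when $\alpha\leq m\leq q-2+\alpha$, and $D_m=\{v_T(1),\ldots,v_T(m+1)\}\setminus\{v_T(q+\alpha)\}$ when $q+\alpha\leq m\leq h-1+\alpha$; and at the two ``closing'' indices $m=q-1+\alpha$ and $m=h+\alpha$ the removed box is reinserted, so $D_m$ equals the Young subdiagram $\{v_T(1),\ldots,v_T(m)\}$ (respectively all of $\lambda$). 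Thus standardness reduces to showing that in each punctured case the removed box is an inner corner of the Young subdiagram $\{v_T(1),\ldots,v_T(m+1)\}$.

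This reduction is the heart of the matter, and it is where the two geometric inputs enter. For the block removing $v_T(q+\alpha)$ the claim is immediate from Lemma \ref{qalphaIC}, which already asserts that $v_T(q+\alpha)$ is an inner corner of all of $\lambda$, hence a fortiori of any subdiagram containing it. For the block removing $v_T(\alpha)$ (which only occurs when $q\geq2$, so that $m+1\leq q-1+\alpha$), I would argue that neither the box directly below $v_T(\alpha)$ nor the box directly to its right can carry a $T$-value at most $q-1+\alpha$: both neighbours lie in a row that is below or equal to $R_T(\alpha)$, whereas the displayed inequalities place each of the values $1+\alpha,\ldots,q-1+\alpha$ in a row strictly above $R_T(\alpha)$; since such a neighbour also carries a value exceeding $\alpha$, its value must be at least $q+\alpha$, hence it lies outside $\{v_T(1),\ldots,v_T(m+1)\}$. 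So $v_T(\alpha)$ is an inner corner of each relevant subdiagram and $U$ is standard—this inner-corner check for $v_T(\alpha)$ is the only real obstacle, and it is a short case analysis once the row inequalities are fixed. Finally, for the $(h,\alpha-1)$-condition I would read off the rows of the values $\alpha,1+\alpha,\ldots,h-1+\alpha$ in $U$, which by Definition \ref{mapredalpha} form the sequence $R_T(1+\alpha),\ldots,R_T(q-1+\alpha),R_T(\alpha),R_T(q+1+\alpha),\ldots,R_T(h+\alpha)$ (the last block again absent when $q=h$); the chain above shows this is strictly increasing, the only comparisons not already consecutive in $T$ being $R_T(q-1+\alpha)<R_T(\alpha)$ and, when $q<h$, $R_T(\alpha)<R_T(q+1+\alpha)$, both of which the chain supplies. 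This is precisely the $(h,\alpha-1)$-condition, so $U\in\operatorname{SYT}_{h,\alpha-1}(\lambda)$ as required.
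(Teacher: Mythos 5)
Your proof is correct, and its skeleton coincides with the paper's: dispose of the case $T\in\operatorname{SYT}_{h,\alpha-1}(\lambda)$, extract the chain of row inequalities (the paper's Equation (\ref{ineqvarphi})) from the $(h,\alpha)$-condition together with the maximality of $q$, read the $(h,\alpha-1)$-condition for $\varphi_{\lambda,h,\alpha}(T)$ off that chain, and rely on Lemma \ref{qalphaIC} for standardness. The one place you genuinely diverge is the packaging of the standardness check. The paper argues directly on the filling: boxes holding values below $\alpha$ are untouched; the boxes holding the values from $1+\alpha$ to $h+\alpha$ are rightmost in their rows (with $v_{T}(\alpha)$ either rightmost or immediately left of $v_{T}(q+\alpha)$), so comparisons between moved and unmoved boxes survive; and the comparisons among moved boxes are settled by stripping off the inner corner $v_{T}(q+\alpha)$, which now carries $h+\alpha$, and invoking the already-established row inequalities. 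You instead verify that every initial segment $D_{m}$ of the new filling is a Young subdiagram, which reduces everything to two removability statements: $v_{T}(q+\alpha)$ is removable from each intermediate diagram (immediate from Lemma \ref{qalphaIC}, since an inner corner of $\lambda$ remains an inner corner of any subdiagram containing it), and $v_{T}(\alpha)$ is removable from $\{v_{T}(1),\ldots,v_{T}(m+1)\}$ for $m\leq q-2+\alpha$, which your neighbour argument establishes. Your version is more systematic and makes fully explicit a point the paper treats rather tersely (why the relocated value $q-1+\alpha$ sitting in $v_{T}(\alpha)$ causes no violation against its right and lower neighbours); the paper's version is shorter because it recycles the ``rightmost in its row'' observation already isolated in the proof of Lemma \ref{qalphaIC}. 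Both arguments are complete, and they rest on the same two geometric inputs.
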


\begin{proof}
The result is obvious when $T\in\operatorname{SYT}_{h,\alpha-1}(\lambda)$ by definition, so assume that $T$ is in $\operatorname{SYT}_{h,\alpha}(\lambda)$ but not in $\operatorname{SYT}_{h,\alpha-1}(\lambda)$, and let $q$ be as in Definition \ref{mapredalpha}. We will first prove the inequalities from the $(h,\alpha-1)$-condition from Definition \ref{SYThalpha} for $\varphi_{\lambda,h,\alpha}(T)$, and then show that the latter tableau is also standard.

For every $1 \leq j \leq q-2$ and for every $q+1 \leq j \leq h$ (one of these ranges may be empty in case $q$ is close to 1 or to $h$), Definition \ref{mapredalpha} and the fact that $T\in\operatorname{SYT}_{h,\alpha}(\lambda)$ give the inequality \[R_{\varphi_{\lambda,h,\alpha}(T)}(j+\alpha-1)=R_{T}(j+\alpha)<R_{T}(j+1+\alpha)=R_{\varphi_{\lambda,h,\alpha}(T)}(j+\alpha),\] as desired. For the remaining values of $j$, we recall that the maximality of $q$ implies that
\begin{equation}
R_{T}(q-1+\alpha)<R_{T}(q+\alpha) \leq R_{T}(\alpha)<R_{T}(q+1+\alpha), \label{ineqvarphi}
\end{equation}
with the leftmost inequality holding only if $q\geq2$. As the leftmost number is $R_{\varphi_{\lambda,h,\alpha}(T)}(q+\alpha-2)$ (when $q\geq2$), the rightmost one is $R_{\varphi_{\lambda,h,\alpha}(T)}(q+\alpha)$, and one of the intermediate terms is $R_{\varphi_{\lambda,h,\alpha}(T)}(q-1+\alpha)$ (all by Definition \ref{mapredalpha}), the inequalities for the remaining value $q$ of $j$, as well as $q-1$ in case $q\geq2$, follow.

Next, Definition \ref{mapredalpha} implies that all the boxes in $T$ that were filled by numbers that are smaller than $\alpha$ still remain with their numbers also in $\varphi_{\lambda,h,\alpha}(T)$, so satisfy the standard condition. Moreover, the proof of Lemma \ref{qalphaIC} shows that the boxes that were filled by the numbers between $1+\alpha$ and $h+\alpha$ are the rightmost of their rows, and for $\alpha$ either the same happens (by the same argument), or we have $R_{T}(\alpha)=R_{T}(q+\alpha)$ and $\alpha$ lies directly to the left of $q+\alpha$. It follows that also in $\varphi_{\lambda,h,\alpha}(T)$ the inequalities comparing the places of boxes containing numbers that are at least $\alpha$ with those having numbers that are smaller than $\alpha$, that are required for $\varphi_{\lambda,h,\alpha}(T)$ to be standard, are also satisfied.

Finally, we recall from Definition \ref{mapredalpha} and Lemma \ref{qalphaIC} that $v_{\varphi_{\lambda,h,\alpha}(T)}(h+\alpha)=v_{T}(q+\alpha)$ is in $\operatorname{IC}(\lambda)$, and hence $\varphi_{\lambda,h,\alpha}(T)$ is standard if and only if removing that box and the number $h+\alpha$ gives a standard tableau (see Remark \ref{forBR}). But combining the inequalities from the $(h,\alpha-1)$-condition from Definition \ref{SYThalpha} with the previous paragraph shows that the latter tableau is indeed standard, hence so is $\varphi_{\lambda,h,\alpha}(T)$. This completes the proof of the proposition.
\end{proof}

We now construct a map in the opposite direction.
\begin{defn}
Take again $h>0$, $\alpha>0$, and a partition $\lambda \vdash h+\alpha$, and for $T\in\operatorname{SYT}_{h,\alpha-1}(\lambda)$ we define the tableau $\phi_{\lambda,h,\alpha}(T)$, also of shape $\lambda$, as follows. When $T$ is also in $\operatorname{SYT}_{h,\alpha}(\lambda)$, we define $\phi_{\lambda,h,\alpha}(T):=T$, and otherwise there is $0 \leq i<h$ such that $R_{T}(h+\alpha) \leq R_{T}(i+\alpha)$, and we set \[p:=\min\{0 \leq i \leq h-1\;|\;R_{T}(h+\alpha) \leq R_{T}(i+\alpha)\}.\] Using this value of $p$, take any $1 \leq m \leq h+\alpha$, and define
\begin{align*}
v_{\phi_{\lambda,h,\alpha}(T)}(m) & =\begin{cases} v_{T}(m) & 1 \leq m \leq \alpha-1, \\ v_{T}(m-1) & 1+\alpha \leq m \leq p+\alpha\mathrm{\ \ or\ \ }p+2+\alpha \leq m \leq h+\alpha, \\ v_{T}(h+\alpha) & m=p+1+\alpha, \\ v_{T}(p+\alpha) & m=\alpha. \end{cases}
\end{align*} \label{mapincalpha}
\end{defn}
Also in Definition \ref{mapincalpha}, all the boxes $v_{T}(m)$, $1 \leq m \leq h+\alpha$ appear once in the definition, thus yielding a tableau of shape $\lambda$. We now prove the analogue of Proposition \ref{decalpha} for $\phi_{\lambda,h,\alpha}$.
\begin{prop}
Given $\alpha$, $h$, and $\lambda$ as always, we have $\phi_{\lambda,h,\alpha}(T)\in\operatorname{SYT}_{h,\alpha}(\lambda)$ for every tableau $T\in\operatorname{SYT}_{h,\alpha-1}(\lambda)$. \label{enlalpha}
\end{prop}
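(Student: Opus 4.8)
The plan is to follow the template of Proposition \ref{decalpha}. If $T\in\operatorname{SYT}_{h,\alpha}(\lambda)$ then $\phi_{\lambda,h,\alpha}(T)=T$ by Definition \ref{mapincalpha} and there is nothing to prove, so I assume $T\in\operatorname{SYT}_{h,\alpha-1}(\lambda)\setminus\operatorname{SYT}_{h,\alpha}(\lambda)$ and write $S:=\phi_{\lambda,h,\alpha}(T)$. In this case $R_{T}(h+\alpha)\leq R_{T}(h-1+\alpha)$, and the integer $p$ of Definition \ref{mapincalpha} is characterized by $R_{T}(p-1+\alpha)<R_{T}(h+\alpha)\leq R_{T}(p+\alpha)$, the left inequality being present only when $p\geq1$. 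I must show two things: that $S$ satisfies the $(h,\alpha)$-condition of Definition \ref{SYThalpha}, and that $S$ is standard.

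First I would verify the $(h,\alpha)$-condition. Reading the rows of the entries $1+\alpha,\ldots,h+\alpha$ off Definition \ref{mapincalpha}, one gets the sequence $R_{T}(\alpha),\ldots,R_{T}(p-1+\alpha),R_{T}(h+\alpha),R_{T}(p+1+\alpha),\ldots,R_{T}(h-1+\alpha)$. Its two increasing runs come from the $(h,\alpha-1)$-condition on $T$, while the two junctions $R_{T}(p-1+\alpha)<R_{T}(h+\alpha)$ and $R_{T}(h+\alpha)\leq R_{T}(p+\alpha)<R_{T}(p+1+\alpha)$ follow from the minimality of $p$ together with the $(h,\alpha-1)$-condition (the second junction being vacuous when $p=h-1$). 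Hence the sequence strictly increases, which is exactly the $(h,\alpha)$-condition.

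Next I would prove standardness by splitting the cells into those carrying entries smaller than $\alpha$ and those carrying entries at least $\alpha$. The former are fixed by $\phi_{\lambda,h,\alpha}$, so all adjacencies among them are inherited from $T$; and since $\phi_{\lambda,h,\alpha}$ only permutes the entries $\geq\alpha$ within the fixed cell set $\{v_{T}(\alpha),\ldots,v_{T}(h+\alpha)\}$, every adjacency between a cell with entry $<\alpha$ and a cell with entry $\geq\alpha$ remains valid, the smaller entry sitting weakly above and to the left exactly as in $T$. It then remains to treat adjacencies among the cells $v_{T}(\alpha),\ldots,v_{T}(h+\alpha)$. Here I would use that the first $h$ of them lie in strictly increasing rows (by the $(h,\alpha-1)$-condition), so no two share a row and only vertical adjacencies can occur among them, while $v_{T}(h+\alpha)$, carrying the maximal entry of $T$, is automatically an inner corner of $\lambda$ (playing the role of Lemma \ref{qalphaIC}, more simply) and hence has no box to its right or below it.

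The hard part will be the column comparisons around the ``cut'' at $p$. The relabelling increments values cyclically within the two blocks $[\alpha,p+\alpha]$ and $[p+1+\alpha,h+\alpha]$, so the only way a larger entry could come to lie directly above a smaller one is at the wrap-around $v_{T}(p+\alpha)\mapsto\alpha$; thus I must rule out a box with entry exceeding $\alpha$ lying directly above $v_{T}(p+\alpha)$ in $S$. The only candidates are $v_{T}(h+\alpha)$, which is excluded because it is an inner corner of $T$ and so has nothing below it, and $v_{T}(p-1+\alpha)$. I expect the latter to be ruled out by a positional argument in the spirit of Lemma \ref{qalphaIC}: were $v_{T}(p-1+\alpha)$ directly above $v_{T}(p+\alpha)$, the inequalities defining $p$ would force $R_{T}(h+\alpha)=R_{T}(p+\alpha)$, whence the box directly above $v_{T}(h+\alpha)$ would lie in row $R_{T}(p-1+\alpha)$ and strictly to the right of $v_{T}(p-1+\alpha)$, so it would carry an entry larger than $p-1+\alpha$ yet sit in a row too high for the $(h,\alpha-1)$-condition to allow, a contradiction. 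Once this configuration is excluded, the remaining comparisons are immediate, since the relabelling is order-preserving inside each block and the two blocks are separated row-wise by the $(h,\alpha)$-condition already established; this shows $S\in\operatorname{SYT}_{h,\alpha}(\lambda)$.
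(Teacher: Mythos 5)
Your proposal is correct and follows essentially the same route as the paper: establish the $(h,\alpha)$-condition from the row sequence and the minimality of $p$ (the paper's Equation (\ref{ineqphi})), then reduce standardness to the single dangerous adjacency at $v_{T}(p+\alpha)$, which receives the entry $\alpha$. The only (minor) divergence is in excluding that configuration: the paper notes that $v_{T}(p-1+\alpha)$ is the rightmost box of its row, so $C_{T}(p-1+\alpha)\geq C_{T}(h+\alpha)>C_{T}(p+\alpha)$, whereas you derive a contradiction from the entry of the box directly above $v_{T}(h+\alpha)$ in the spirit of Lemma \ref{qalphaIC}; both arguments are valid.
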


\begin{proof}
This is clearly the case when $T\in\operatorname{SYT}_{h,\alpha}(\lambda)$, so take $T\in\operatorname{SYT}_{h,\alpha-1}(\lambda)\setminus\operatorname{SYT}_{h,\alpha}(\lambda)$, with $p$ as given in Definition \ref{mapincalpha}. Also here we begin with establishing the $(h,\alpha)$-condition for $\phi_{\lambda,h,\alpha}(T)$, and then show that it is also standard.

Given $1 \leq j \leq p-1$ or $p+2 \leq j \leq h$, Definition \ref{mapincalpha} and the $(h,\alpha-1)$-condition on $T$ yield \[R_{\phi_{\lambda,h,\alpha}(T)}(j+\alpha)=R_{T}(j-1+\alpha)<R_{T}(j+\alpha)=R_{\phi_{\lambda,h,\alpha}(T)}(j+1+\alpha)\] (also here, for $p$ close to 1 or to $h$ one of the ranges can be empty). We also get from the minimality of $p$ that
\begin{equation}
R_{T}(p-1+\alpha)<R_{T}(h+\alpha) \leq R_{T}(p+\alpha)<R_{T}(p+1+\alpha) \label{ineqphi}
\end{equation}
(the one on the right holding only if $p \leq h-2$), where Definition \ref{mapincalpha} shows that the smallest one here is $R_{\phi_{\lambda,h,\alpha}(T)}(p+\alpha)$, the largest one is $R_{\phi_{\lambda,h,\alpha}(T)}(p+2+\alpha)$ when $p \leq h-2$, and $R_{\phi_{\lambda,h,\alpha}(T)}(p+1+\alpha)$ lies in between. The desired condition is thus established.

Next, the boxes including the numbers between 1 and $\alpha-1$ are unaffected by $\phi_{\lambda,h,\alpha}$, and the larger ones are mixed, so that the inequalities involving the smaller numbers hold for $\phi_{\lambda,h,\alpha}(T)$ because $T$ is standard. Moreover, all the boxes with the larger numbers are the rightmost in their rows, except for $v_{T}(p+\alpha)$ in case its row coincides with $R_{T}(h+\alpha)$. The inequalities from the $(h,\alpha)$-condition leaves only the relation between the box $v_{\phi_{\lambda,h,\alpha}(T)}(\alpha)$ and the one with larger numbers to be verified.

Now, Definition \ref{mapincalpha} transforms Equation (\ref{ineqphi}) into
\begin{equation}
R_{\phi_{\lambda,h,\alpha}(T)}(p+\alpha)<R_{\phi_{\lambda,h,\alpha}(T)}(p+1+\alpha) \leq R_{\phi_{\lambda,h,\alpha}(T)}(\alpha)<R_{\phi_{\lambda,h,\alpha}(T)}(p+2+\alpha), \label{ineqafterphi}
\end{equation}
so that we only have to compare $v_{\phi_{\lambda,h,\alpha}(T)}(\alpha)$ with $v_{\phi_{\lambda,h,\alpha}(T)}(p+1+\alpha)$, as well as $v_{\phi_{\lambda,h,\alpha}(T)}(p+\alpha)$ in case the inequality in the middle is an equality. But we recall, e.g., from the proof of Proposition \ref{SYT2}, that in equality $R_{T}(p+\alpha) \geq R_{T}(h+\alpha)$ from Equation (\ref{ineqphi}) implies $C_{T}(p+\alpha)<C_{T}(h+\alpha)$, which becomes $C_{\phi_{\lambda,h,\alpha}(T)}(\alpha)<C_{\phi_{\lambda,h,\alpha}(T)}(p+1+\alpha)$ by Definition \ref{mapincalpha}. Moreover, in the case where $R_{T}(h+\alpha)=R_{T}(p+\alpha)$ then the fact that $p-1+\alpha$ sits in the rightmost box of its row in $T$ implies that \[C_{\phi_{\lambda,h,\alpha}(T)}(p+\alpha)=C_{T}(p-1+\alpha) \geq C_{T}(h+\alpha)>C_{T}(p+\alpha)=C_{\phi_{\lambda,h,\alpha}(T)}(\alpha)\] as well, and $\phi_{\lambda,h,\alpha}(T)$ is indeed standard. This completes the proof of the proposition.
\end{proof}

\smallskip

Now that Propositions \ref{decalpha} and \ref{enlalpha} show that the maps from Definitions \ref{mapredalpha} and \ref{mapincalpha} go between $\operatorname{SYT}_{h,\alpha}(\lambda)$ and $\operatorname{SYT}_{h,\alpha-1}(\lambda)$, for establishing the equality of the cardinalities of these sets, we need to show that these maps are inverses.
\begin{lem}
Let $h$, $\alpha$, and $\lambda$ be as above. Then for $T\in\operatorname{SYT}_{h,\alpha}(\lambda)\setminus\operatorname{SYT}_{h,\alpha-1}(\lambda)$, with the parameter $q$ from Definition \ref{mapredalpha}, we get $\varphi_{\lambda,h,\alpha}(T)\in\operatorname{SYT}_{h,\alpha-1}(\lambda)\setminus\operatorname{SYT}_{h,\alpha}(\lambda)$, and we have the equality \[\min\{0 \leq i \leq h-1\;|\;R_{\varphi_{\lambda,h,\alpha}(T)}(h+\alpha) \leq R_{\varphi_{\lambda,h,\alpha}(T)}(i+\alpha)\}=q-1.\] Similarly, given $T\in\operatorname{SYT}_{h,\alpha-1}(\lambda)\setminus\operatorname{SYT}_{h,\alpha}(\lambda)$, to which Definition \ref{mapincalpha} associates the parameter $p$, we get $\phi_{\lambda,h,\alpha}(T)\in\operatorname{SYT}_{h,\alpha}(\lambda)\setminus\operatorname{SYT}_{h,\alpha-1}(\lambda)$ and \[\max\{1 \leq i \leq h\;|\;R_{\phi_{\lambda,h,\alpha}(T)}(i+\alpha) \leq R_{\phi_{\lambda,h,\alpha}(T)}(\alpha)\}=p+1.\] \label{relspq}
\end{lem}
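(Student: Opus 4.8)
The plan is to reduce both halves of the lemma to bookkeeping of how the rows $R_{\bullet}(i+\alpha)$ of the relevant entries transform under the explicit formulas in Definitions \ref{mapredalpha} and \ref{mapincalpha}, and then to read off the membership claims and the values of the transformed parameters simultaneously. First I would record a criterion for the two difference sets. If $S\in\operatorname{SYT}_{h,\alpha-1}(\lambda)$, then the $(h,\alpha-1)$-condition already forces $R_{S}(\alpha)<R_{S}(1+\alpha)<\cdots<R_{S}(h-1+\alpha)$, so $S\notin\operatorname{SYT}_{h,\alpha}(\lambda)$ exactly when $R_{S}(h+\alpha)\leq R_{S}(h-1+\alpha)$; using the monotonicity just noted, this is equivalent to the set $\{0\leq i\leq h-1\mid R_{S}(h+\alpha)\leq R_{S}(i+\alpha)\}$ defining $p$ in Definition \ref{mapincalpha} being non-empty. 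Symmetrically, for $S\in\operatorname{SYT}_{h,\alpha}(\lambda)$ one has $R_{S}(1+\alpha)<\cdots<R_{S}(h+\alpha)$, so $S\notin\operatorname{SYT}_{h,\alpha-1}(\lambda)$ exactly when $R_{S}(1+\alpha)\leq R_{S}(\alpha)$, equivalently when the set $\{1\leq i\leq h\mid R_{S}(i+\alpha)\leq R_{S}(\alpha)\}$ defining $q$ in Definition \ref{mapredalpha} is non-empty. Thus in each half it suffices to exhibit one witnessing index and rule out the indices beyond it, and the witness will simultaneously give membership in the difference set and the asserted parameter value.

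For the first half, Proposition \ref{decalpha} already supplies $\varphi_{\lambda,h,\alpha}(T)\in\operatorname{SYT}_{h,\alpha-1}(\lambda)$, so only the parameter computation remains. Reading Definition \ref{mapredalpha} off gives $R_{\varphi_{\lambda,h,\alpha}(T)}(h+\alpha)=R_{T}(q+\alpha)$, $R_{\varphi_{\lambda,h,\alpha}(T)}(q-1+\alpha)=R_{T}(\alpha)$, and $R_{\varphi_{\lambda,h,\alpha}(T)}(i+\alpha)=R_{T}(i+1+\alpha)$ for $0\leq i\leq q-2$. By the middle inequality $R_{T}(q+\alpha)\leq R_{T}(\alpha)$ of Equation (\ref{ineqvarphi}), the index $i=q-1$ lies in the set defining $p$, so the criterion above yields $\varphi_{\lambda,h,\alpha}(T)\notin\operatorname{SYT}_{h,\alpha}(\lambda)$. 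For $0\leq i\leq q-2$ the $(h,\alpha)$-condition on $T$ gives $R_{T}(i+1+\alpha)<R_{T}(q+\alpha)$ (since $i+1<q$), that is $R_{\varphi_{\lambda,h,\alpha}(T)}(h+\alpha)>R_{\varphi_{\lambda,h,\alpha}(T)}(i+\alpha)$, so no smaller index qualifies. Hence the minimum is $q-1$, as claimed.

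The second half is entirely parallel, with the roles of $p$ and $q$ and of Equations (\ref{ineqvarphi}) and (\ref{ineqphi}) interchanged. Proposition \ref{enlalpha} gives $\phi_{\lambda,h,\alpha}(T)\in\operatorname{SYT}_{h,\alpha}(\lambda)$; Definition \ref{mapincalpha} yields $R_{\phi_{\lambda,h,\alpha}(T)}(\alpha)=R_{T}(p+\alpha)$, $R_{\phi_{\lambda,h,\alpha}(T)}(p+1+\alpha)=R_{T}(h+\alpha)$, and $R_{\phi_{\lambda,h,\alpha}(T)}(i+\alpha)=R_{T}(i-1+\alpha)$ for $p+2\leq i\leq h$. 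The inequality $R_{T}(h+\alpha)\leq R_{T}(p+\alpha)$ from Equation (\ref{ineqphi}) places $i=p+1$ in the set defining $q$, giving $\phi_{\lambda,h,\alpha}(T)\notin\operatorname{SYT}_{h,\alpha-1}(\lambda)$, while the $(h,\alpha-1)$-condition on $T$ gives $R_{T}(i-1+\alpha)>R_{T}(p+\alpha)$ for $p+2\leq i\leq h$, ruling out all larger indices; hence the maximum is $p+1$.

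Since the substance is pure index bookkeeping in the two piecewise definitions, the only genuine care is needed at the degenerate extremes ($q=1$ or $q=h$, and $p=0$ or $p=h-1$), where some of the index ranges in Definitions \ref{mapredalpha} and \ref{mapincalpha} collapse and one or the other of the inequalities in Equations (\ref{ineqvarphi}) and (\ref{ineqphi}) drops out. The hard part will be checking that in each such case the witnessing index ($q-1$, respectively $p+1$) still lies in the admissible range $\{0,\dots,h-1\}$, respectively $\{1,\dots,h\}$, so that the parameter stays well-defined and the membership assertions survive; everything else follows from the monotonicity criteria established in the first paragraph.
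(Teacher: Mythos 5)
Your proposal is correct and follows essentially the same route as the paper: both arguments push the key inequalities (Equation (\ref{ineqvarphi}), resp. (\ref{ineqphi})) through the piecewise formulas of Definitions \ref{mapredalpha} and \ref{mapincalpha}, exhibit $q-1$ (resp. $p+1$) as a witnessing index, and exclude the indices on the wrong side using the monotonicity of the rows guaranteed by the relevant $(h,\alpha)$- or $(h,\alpha-1)$-condition. Your explicit ``non-emptiness of the parameter set'' criterion is just a slightly more verbose version of what the paper reads off from its displayed transformed inequalities, and the boundary cases you flag as the remaining work are in fact immediate since $1\leq q\leq h$ and $0\leq p\leq h-1$ by construction.
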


\begin{proof}
For the first assertion, Definition \ref{mapredalpha} transforms Equation (\ref{ineqvarphi}) into \[R_{\varphi_{\lambda,h,\alpha}(T)}(q-2+\alpha)<R_{\varphi_{\lambda,h,\alpha}(T)}(h+\alpha) \leq R_{\varphi_{\lambda,h,\alpha}(T)}(q-1+\alpha)\] (the strict one showing up only for $q\geq2$, and we removed the largest value), which shows that the tableau $\varphi_{\lambda,h,\alpha}(T)$, which is in $\operatorname{SYT}_{h,\alpha-1}(\lambda)$ by Proposition \ref{decalpha}, does not lie in $\operatorname{SYT}_{h,\alpha}(\lambda)$. Moreover, it implies that the minimum from Definition \ref{mapincalpha} is at most $q-1$, but cannot be $q-2$ or smaller in case $q\geq2$, so that it indeed equals $q-1$ as desired.

In the setting of the second assertion, Equation (\ref{ineqafterphi}) implies that $\varphi_{\lambda,h,\alpha}(T)$, which is known to be in $\operatorname{SYT}_{h,\alpha}(\lambda)$ via Proposition \ref{enlalpha}, is not in $\operatorname{SYT}_{h,\alpha-1}(\lambda)$, with the maximum from Definition \ref{mapredalpha} being at least $p+1$, but smaller than $p+2$ in case $p \leq h$. This maximum therefore equals $p+1$ as required. This proves the lemma.
\end{proof}

\begin{prop}
For such $h$, $\alpha$, and $\lambda$ the maps $\varphi_{\lambda,h,\alpha}$ and $\phi_{\lambda,h,\alpha}$ are inverses. \label{invmaps}
\end{prop}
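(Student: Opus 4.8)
The plan is to verify directly that both composites $\phi_{\lambda,h,\alpha}\circ\varphi_{\lambda,h,\alpha}$ and $\varphi_{\lambda,h,\alpha}\circ\phi_{\lambda,h,\alpha}$ are the identity, by tracking the box $v_{(\cdot)}(m)$ of each entry $m$ through the two piecewise substitutions. First I would dispose of the trivial case: if $T$ lies in the intersection $\operatorname{SYT}_{h,\alpha}(\lambda)\cap\operatorname{SYT}_{h,\alpha-1}(\lambda)$, then both maps are defined to fix $T$, so each composite returns $T$ at once. It remains to treat the ``moving'' parts, and here the essential input is Lemma \ref{relspq}, which guarantees that $\varphi_{\lambda,h,\alpha}$ carries $\operatorname{SYT}_{h,\alpha}(\lambda)\setminus\operatorname{SYT}_{h,\alpha-1}(\lambda)$ into $\operatorname{SYT}_{h,\alpha-1}(\lambda)\setminus\operatorname{SYT}_{h,\alpha}(\lambda)$ (and symmetrically for $\phi_{\lambda,h,\alpha}$), and---crucially---pins down the parameter of the image: the $p$ attached to $\varphi_{\lambda,h,\alpha}(T)$ equals $q-1$, while the $q$ attached to $\phi_{\lambda,h,\alpha}(T)$ equals $p+1$. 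This matching $q\leftrightarrow p+1$ is exactly what makes the two substitutions---which amount to inverse rearrangements of the entries between $\alpha$ and $h+\alpha$---undo one another.

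For the composite $\phi_{\lambda,h,\alpha}\circ\varphi_{\lambda,h,\alpha}$ on $T\in\operatorname{SYT}_{h,\alpha}(\lambda)\setminus\operatorname{SYT}_{h,\alpha-1}(\lambda)$, I would set $S:=\varphi_{\lambda,h,\alpha}(T)$ and apply Definition \ref{mapincalpha} to $S$ with the value $p=q-1$ supplied by Lemma \ref{relspq}. Substituting the formula of Definition \ref{mapredalpha} for $v_{S}(\cdot)$ into the formula of Definition \ref{mapincalpha} for $v_{\phi_{\lambda,h,\alpha}(S)}(\cdot)$, I would then check range by range that $v_{\phi_{\lambda,h,\alpha}(S)}(m)=v_{T}(m)$ for every $1\leq m\leq h+\alpha$. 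Concretely: the entries $1\leq m\leq\alpha-1$ are untouched by both maps; the entry $m=\alpha$ is sent by $\phi_{\lambda,h,\alpha}$ to $v_{S}(q-1+\alpha)=v_{T}(\alpha)$; for $1+\alpha\leq m\leq q-1+\alpha$ the shift $v_{S}(m-1)=v_{T}(m)$ undoes the earlier shift $v_{T}(m+1)$; the entry $m=q+\alpha$ goes to $v_{S}(h+\alpha)=v_{T}(q+\alpha)$; and for $q+1+\alpha\leq m\leq h+\alpha$ again $v_{S}(m-1)=v_{T}(m)$. This yields $\phi_{\lambda,h,\alpha}(\varphi_{\lambda,h,\alpha}(T))=T$.

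The reverse composite is handled by the mirror-image computation: for $T\in\operatorname{SYT}_{h,\alpha-1}(\lambda)\setminus\operatorname{SYT}_{h,\alpha}(\lambda)$ one sets $S:=\phi_{\lambda,h,\alpha}(T)$, uses Lemma \ref{relspq} to feed $q=p+1$ into Definition \ref{mapredalpha}, and checks in the same five ranges that $v_{\varphi_{\lambda,h,\alpha}(S)}(m)=v_{T}(m)$, the two special boxes $v_{S}(\alpha)=v_{T}(p+\alpha)$ and $v_{S}(p+1+\alpha)=v_{T}(h+\alpha)$ returning to their original positions. I expect the only real obstacle to be purely clerical: keeping the index ranges of the two piecewise definitions aligned after the $\pm1$ shifts, and confirming that the boundary entries $q-1+\alpha$ and $q+\alpha$ (respectively $p+\alpha$ and $p+1+\alpha$) are routed correctly---which is precisely where the identity $q=p+1$ from Lemma \ref{relspq} is used. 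Since both composites are then the identity on their whole domains, $\varphi_{\lambda,h,\alpha}$ and $\phi_{\lambda,h,\alpha}$ are mutually inverse bijections, completing the proof.
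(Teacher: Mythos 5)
Your proposal is correct and follows essentially the same route as the paper's proof: dispose of the intersection case where both maps fix $T$, invoke Lemma \ref{relspq} to identify the parameter of the image ($p=q-1$, respectively $q=p+1$), and then verify the composite range by range against the piecewise formulas of Definitions \ref{mapredalpha} and \ref{mapincalpha}. The range bookkeeping you outline (including the routing of the special entries $\alpha$, $q-1+\alpha$, $q+\alpha$, and $h+\alpha$) matches the paper's computation exactly.
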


\begin{proof}
We need to show that $\phi_{\lambda,h,\alpha}\big(\varphi_{\lambda,h,\alpha}(T)\big)=T$ for every $T\in\operatorname{SYT}_{h,\alpha}(\lambda)$, as well as the equality $\varphi_{\lambda,h,\alpha}\big(\phi_{\lambda,h,\alpha}(T)\big)=T$ for any $T\in\operatorname{SYT}_{h,\alpha-1}(\lambda)$. Since if $T\in\operatorname{SYT}_{h,\alpha}(\lambda)\cap\operatorname{SYT}_{h,\alpha-1}(\lambda)$ we have $\varphi_{\lambda,h,\alpha}(T)=\phi_{\lambda,h,\alpha}(T)=T$ in Definitions \ref{mapredalpha} and \ref{mapincalpha}, both are clear in this case.

We thus consider $T\in\operatorname{SYT}_{h,\alpha}(\lambda)\setminus\operatorname{SYT}_{h,\alpha-1}(\lambda)$, and take $q$ as in Definition \ref{mapredalpha}. Then Lemma \ref{relspq} implies that $\varphi_{\lambda,h,\alpha}(T)\in\operatorname{SYT}_{h,\alpha-1}(\lambda)\setminus\operatorname{SYT}_{h,\alpha}(\lambda)$, and for evaluating its $\phi_{\lambda,h,\alpha}$-image we take $p=q-1$ in Definition \ref{mapincalpha}. This shows that the cell in which a number $1 \leq m \leq h+\alpha$ lies in $\phi_{\lambda,h,\alpha}\big(\varphi_{\lambda,h,\alpha}(T)\big)$ is given by \[\begin{cases} v_{\varphi_{\lambda,h,\alpha}(T)}(m) & 1 \leq m \leq \alpha-1, \\ v_{\varphi_{\lambda,h,\alpha}(T)}(m-1) & 1+\alpha \leq m \leq q-1+\alpha\mathrm{\ \ or\ \ }q+1+\alpha \leq m \leq h+\alpha, \\ v_{\varphi_{\lambda,h,\alpha}(T)}(h+\alpha) & m=q+\alpha, \\ v_{\varphi_{\lambda,h,\alpha}(T)}(q-1+\alpha) & m=\alpha. \end{cases}\] Noting that if $1+\alpha \leq m \leq q-1+\alpha$ or $q+1+\alpha \leq m \leq h+\alpha$ then $\alpha \leq m-1 \leq q-2+\alpha$ or $q+\alpha \leq m \leq h-1+\alpha$, Definition \ref{mapredalpha} transforms this expression into
\[\begin{cases} v_{T}(m) & 1 \leq m\leq\alpha-1, \\ v_{T}(m-1+1) & 1+\alpha \leq m \leq q-1+\alpha\mathrm{\ \ or\ \ }q+1+\alpha \leq m \leq h+\alpha,  \\ v_{T}(q+\alpha) & m=q+\alpha, \\ v_{T}(\alpha) & m=\alpha, \end{cases},\] which equals $v_{T}(m)$ for all $m$ and thus proving the desired equality.

Finally, take $T\in\operatorname{SYT}_{h,\alpha-1}(\lambda)\setminus\operatorname{SYT}_{h,\alpha}(\lambda)$, let $p$ be the number from Definition \ref{mapincalpha}, and then $\phi_{\lambda,h,\alpha}(T)\in\operatorname{SYT}_{h,\alpha}(\lambda)\setminus\operatorname{SYT}_{h,\alpha-1}(\lambda)$ by Lemma \ref{relspq}, and the associated number from Definition \ref{mapredalpha} is $p+1$. It follows that given $1 \leq m \leq h+\alpha$, the box containing it in $\varphi_{\lambda,h,\alpha}\big(\phi_{\lambda,h,\alpha}(T)\big)$ is \[\begin{cases} v_{\phi_{\lambda,h,\alpha}(T)}(m) & 1 \leq m\leq\alpha-1, \\ v_{\phi_{\lambda,h,\alpha}(T)}(m+1) & \alpha \leq m \leq p-1+\alpha\mathrm{\ \ or\ \ }p+1+\alpha \leq m \leq h-1+\alpha, \\ v_{\phi_{\lambda,h,\alpha}(T)}(\alpha) & m=p+\alpha, \\ v_{\phi_{\lambda,h,\alpha}(T)}(p+1+\alpha) & m=h+\alpha. \end{cases}\] Here the inequalities $\alpha \leq m \leq p-1+\alpha$ or $p+1+\alpha \leq m \leq h-1+\alpha$ transform into $1+\alpha \leq m+1 \leq p+\alpha$ or $p+2+\alpha \leq m+1 \leq h+\alpha$, so that Definition \ref{mapincalpha} expresses this as \[\begin{cases} v_{T}(m) & 1 \leq m \leq \alpha-1, \\ v_{T}(m+1-1) & \alpha \leq m \leq p-1+\alpha\mathrm{\ \ or\ \ }p+1+\alpha \leq m \leq h-1+\alpha, \\ v_{T}(p+\alpha) & m=p+\alpha, \\ v_{T}(h+\alpha) & m=h+\alpha, \end{cases}\] which is again $v_{T}(m)$ for any $m$ and the asserted equality follows. This proves the proposition.
\end{proof}

Since Proposition \ref{invmaps} implies that maps from Definitions \ref{mapredalpha} and \ref{mapincalpha} are bijections between $\operatorname{SYT}_{h,\alpha}(\lambda)$ and $\operatorname{SYT}_{h,\alpha-1}(\lambda)$, we deduce the following immediate consequence about their cardinalities.
\begin{cor}
Given $h>0$ and $\alpha>0$, for any $\lambda \vdash h+\alpha$ we have $f_{h,\alpha}^{\lambda}=f_{h,\alpha-1}^{\lambda}$. \label{samesize}
\end{cor}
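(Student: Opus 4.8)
The plan is to read this corollary off directly from the bijection that has just been built, so that it requires no new combinatorial work. By Propositions \ref{decalpha} and \ref{enlalpha}, the maps $\varphi_{\lambda,h,\alpha}$ and $\phi_{\lambda,h,\alpha}$ from Definitions \ref{mapredalpha} and \ref{mapincalpha} are honest maps $\operatorname{SYT}_{h,\alpha}(\lambda)\to\operatorname{SYT}_{h,\alpha-1}(\lambda)$ and $\operatorname{SYT}_{h,\alpha-1}(\lambda)\to\operatorname{SYT}_{h,\alpha}(\lambda)$ respectively. Proposition \ref{invmaps} then shows that these two maps are mutually inverse, and hence each of them is a bijection between the two sets.

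The second and final step is purely formal: both $\operatorname{SYT}_{h,\alpha}(\lambda)$ and $\operatorname{SYT}_{h,\alpha-1}(\lambda)$ are finite, being subsets of the finite set $\operatorname{SYT}(\lambda)$, so the existence of a bijection between them forces their cardinalities to agree. This yields exactly $f_{h,\alpha}^{\lambda}=f_{h,\alpha-1}^{\lambda}$, which is the assertion. I would also note that the corollary is stated precisely for $\lambda\vdash h+\alpha$, which is the same hypothesis under which Definitions \ref{mapredalpha} and \ref{mapincalpha} and Propositions \ref{decalpha}, \ref{enlalpha}, and \ref{invmaps} were set up, so no limiting, extension, or case-splitting on $k$ is needed here.

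Consequently I expect there to be no real obstacle at this stage. All of the genuine difficulty—checking that $\varphi_{\lambda,h,\alpha}$ and $\phi_{\lambda,h,\alpha}$ preserve standardness and the respective $(h,\alpha)$- and $(h,\alpha-1)$-conditions, and that their composites in both orders recover the identity on tableaux—has already been discharged in the preceding results, with the key matching of the parameters $q$ and $p=q-1$ supplied by Lemma \ref{relspq}. The corollary is therefore an immediate consequence of Proposition \ref{invmaps}, and it is this base case $k=h+\alpha$ that subsequently feeds, via the branching identity of Lemma \ref{halphabr}, into the inductive argument establishing independence of $\alpha$ for all $\lambda\vdash k>h+\alpha$.
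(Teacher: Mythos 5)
Your proposal is correct and matches the paper exactly: the corollary is deduced immediately from Proposition \ref{invmaps}, since the mutually inverse maps $\varphi_{\lambda,h,\alpha}$ and $\phi_{\lambda,h,\alpha}$ are bijections between the two finite sets, forcing their cardinalities to coincide. No further comment is needed.
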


We can now prove our independence result.
\begin{thm}
Consider any number $k$, a partition $\lambda \vdash k$, and an integer $1 \leq h \leq k$. Then the size $f_{h,\alpha}^{\lambda}$ of the set $\operatorname{SYT}_{h,\alpha}(\lambda)$ from Definition \ref{SYThalpha} does not depend on the parameter $0\leq\alpha \leq k-h$.
\label{indepofalpha}
\end{thm}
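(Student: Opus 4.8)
The plan is to reduce the independence statement to a chain of equalities between consecutive values of $\alpha$, and then to establish each such equality by induction on $k$, using the bijective base case already secured in Corollary \ref{samesize} together with the branching rule of Lemma \ref{halphabr}. First I would observe that it suffices to prove $f_{h,\alpha}^{\lambda}=f_{h,\alpha-1}^{\lambda}$ for every $\lambda \vdash k$, every $1\le h\le k$, and every admissible $\alpha$ in the range $1\le\alpha\le k-h$; chaining these equalities then forces all the values $f_{h,0}^{\lambda},\ldots,f_{h,k-h}^{\lambda}$ to coincide, which is exactly the asserted independence. I would prove this reduced claim by induction on $k$, the base case $k=h$ being vacuous since then the only admissible value is $\alpha=0$ and the range $1\le\alpha\le 0$ is empty.

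For the inductive step I would fix $\lambda \vdash k$, an index $h$, and an admissible $\alpha\ge 1$, and split into two cases according to whether $\alpha$ is maximal. If $\alpha=k-h$, then $\lambda \vdash h+\alpha$ with $h>0$ and $\alpha>0$, so Corollary \ref{samesize}---whose content is precisely the explicit bijections $\varphi_{\lambda,h,\alpha}$ and $\phi_{\lambda,h,\alpha}$ of Definitions \ref{mapredalpha} and \ref{mapincalpha}---delivers the equality $f_{h,\alpha}^{\lambda}=f_{h,\alpha-1}^{\lambda}$ directly, with no appeal to the induction hypothesis. If instead $\alpha<k-h$, then both $\alpha-1$ and $\alpha$ lie strictly below $k-h$, so Lemma \ref{halphabr} applies to each, giving $f_{h,\alpha}^{\lambda}=\sum_{v\in\operatorname{IC}(\lambda)}f_{h,\alpha}^{\lambda-v}$ and $f_{h,\alpha-1}^{\lambda}=\sum_{v\in\operatorname{IC}(\lambda)}f_{h,\alpha-1}^{\lambda-v}$. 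Each $\lambda-v$ is a partition of $k-1$, and the inequality $\alpha\le k-h-1=(k-1)-h$ guarantees that $\alpha$ still lies in the admissible range for $\lambda-v$; hence the induction hypothesis yields $f_{h,\alpha}^{\lambda-v}=f_{h,\alpha-1}^{\lambda-v}$ termwise, and summing over $\operatorname{IC}(\lambda)$ gives the desired equality for $\lambda$.

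The step demanding the most care is the bookkeeping of the admissible ranges rather than any genuinely difficult argument, since the substantive work has already been absorbed into the construction and inversion of the maps underlying Corollary \ref{samesize}. Concretely, I must verify that the hypothesis $0\le\alpha<k-h$ of Lemma \ref{halphabr} holds for both $\alpha$ and $\alpha-1$ in the non-maximal case, and that $\alpha$ remains admissible after passing from $\lambda\vdash k$ to $\lambda-v\vdash k-1$, which is exactly what the inequality $\alpha\le(k-1)-h$ records. I would also invoke the remark following Lemma \ref{halphabr} to dispense with the constraint $h\le\ell(\lambda)$, since both sides of the branching identity vanish when $\ell(\lambda)<h<k$, so the reduction is legitimate for every $h$ in range.
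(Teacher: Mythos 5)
Your proposal is correct and follows essentially the same route as the paper: the boundary case $\alpha=k-h$ is handled by the bijections packaged in Corollary \ref{samesize}, and all other cases are reduced to partitions of $k-1$ via the branching rule of Lemma \ref{halphabr} and induction on $k$. The only difference is organizational (you fold the boundary case into the inductive step rather than treating $k=h+\alpha$ as the base case for fixed $h$ and $\alpha$), and your bookkeeping of the admissible ranges, including the remark extending Lemma \ref{halphabr} beyond $h\leq\ell(\lambda)$, is accurate.
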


\begin{proof}
It is suffice to consider $0<\alpha \leq k-h$, and show that $f_{h,\alpha}^{\lambda}=f_{h,\alpha-1}^{\lambda}$. We argue by induction on $k$, which we know that must satisfy $k \geq h+\alpha$. The case $k=h+\alpha$ is covered by Corollary \ref{indepofalpha}, so consider $\lambda \vdash k$, and assume that the equality holds for every partition of $k-1$. But then $0\leq\alpha<k-h$, so that we can invoke Lemma \ref{halphabr} and obtain
\[f_{h,\alpha}^{\lambda}=\sum_{v\in\operatorname{IC}(\lambda)}f_{h,\alpha}^{\lambda-v}=\sum_{v\in\operatorname{IC}(\lambda)}f_{h,\alpha-1}^{\lambda-v}=f_{h,\alpha-1}^{\lambda},\] where the middle equality follows from the induction hypothesis on $\lambda-v \vdash k-1$. This completes the proof of the theorem.
\end{proof}
It immediately follows from Theorems \ref{coeffSYT} and \ref{indepofalpha} that the coefficients $a_{\lambda,h}$ from Equation (\ref{expoff}) equal $f_{h,\alpha}^{\lambda}$ for every $0\leq\alpha \leq k-h$, when $\lambda \vdash k$ and $k \geq h$.

\begin{rem}
In fact, the constructions from Definitions \ref{mapredalpha} and \ref{mapincalpha} can be generalized to maps, still denoted by $\varphi_{\lambda,h,\alpha}$ and $\phi_{\lambda,h,\alpha}$, for every $k \geq h+\alpha$ and $\lambda \vdash k$. Explicitly, for a tableau $T\in\operatorname{SYT}_{h,\alpha}(\lambda)\cap\operatorname{SYT}_{h,\alpha-1}(\lambda)$ (for any such $k$ and $\lambda$) we set $\varphi_{\lambda,h,\alpha}(T):=\phi_{\lambda,h,\alpha}(T):=T$. When $T\in\operatorname{SYT}_{h,\alpha}(\lambda)\setminus\operatorname{SYT}_{h,\alpha-1}(\lambda)$, we define $q$ precisely as in Definition \ref{mapredalpha}, and $\varphi_{\lambda,h,\alpha}(T)$ is the tableau of shape $\lambda$ that is given, for $1 \leq m \leq k$, by
\begin{align*}
v_{\varphi_{\lambda,h,\alpha}(T)}(m) & =\begin{cases} v_{T}(m) & 1 \leq m\leq\alpha-1\mathrm{\ \ or\ \ }h+1+\alpha \leq m \leq k, \\ v_{T}(m+1) & \alpha \leq m \leq q-2+\alpha\mathrm{\ \ or\ \ }q+\alpha \leq m \leq h-1+\alpha, \\ v_{T}(\alpha) & m=q-1+\alpha, \\ v_{T}(q+\alpha) & m=h+\alpha. \end{cases}
\end{align*}
In case $T\in\operatorname{SYT}_{h,\alpha-1}(\lambda)\setminus\operatorname{SYT}_{h,\alpha}(\lambda)$, let $p$ be as in Definition \ref{mapincalpha}, and we define $\phi_{\lambda,h,\alpha}(T)$ to be the tableau of shape $\lambda$ which is defined, for $1 \leq m \leq k$, by
\begin{align*}
v_{\phi_{\lambda,h,\alpha}(T)}(m) & =\begin{cases} v_{T}(m) & 1 \leq m \leq \alpha-1\mathrm{\ \ or\ \ }h+1+\alpha \leq m \leq k,  \\ v_{T}(m-1) & 1+\alpha \leq m \leq p+\alpha\mathrm{\ \ or\ \ }p+2+\alpha \leq m \leq h+\alpha, \\ v_{T}(h+\alpha) & m=p+1+\alpha, \\ v_{T}(p+\alpha) & m=\alpha. \end{cases}
\end{align*}
Then $\varphi_{\lambda,h,\alpha}$ takes $\operatorname{SYT}_{h,\alpha}(\lambda)$ into $\operatorname{SYT}_{h,\alpha-1}(\lambda)$, $\phi_{\lambda,h,\alpha}$ sends $\operatorname{SYT}_{h,\alpha-1}(\lambda)$ into $\operatorname{SYT}_{h,\alpha}(\lambda)$, and they are inverse mappings. To see this, note that for general $k$ the maps can be obtained by taking out the boxes containing the numbers between $h+1+\alpha$ and $k$ (in decreasing order), applying the original maps from Definitions \ref{mapredalpha} and \ref{mapincalpha}, and then putting the removed boxes back again (in the opposite order), in correspondence with the application of Lemma \ref{halphabr} in the proof of Theorem \ref{indepofalpha}. The desired properties now follow directly from Propositions \ref{decalpha}, \ref{enlalpha}, and \ref{invmaps}, since Remark \ref{forBR} and the proof of Lemma \ref{halphabr} show that removing and adding back the boxes with larger numbers take standard tableaux to standard tableaux and does not affect the $(h,\alpha)$ and $(h,\alpha-1)$-conditions. \label{genmaps}
\end{rem}
One can, in fact, start from the maps from Remark \ref{genmaps} directly, and establish Theorem \ref{indepofalpha} using them. However, the proofs of their required properties are harder in this case (see, e.g., the use of an inner corner property in Lemma \ref{qalphaIC}, and for general $k \geq h+\alpha$ we no longer have an inner corner), so the easier way is to carry out the technicalities when $k=h+\alpha$ and work the general case from it.

\smallskip

We conclude by writing the maps from Remark \ref{genmaps} explicitly for $\lambda=(3,2,1)\vdash6$ and $h=2$. Then $f^{\lambda}=16$ and $\lambda=\lambda^{t}$, so the proof of Corollary \ref{symev} implies that $f_{h,\alpha}^{\lambda}=8$ for every value of $0\leq\alpha\leq4$. The maps from Remark \ref{genmaps} between all the possible tableaux are given in the table in the next page.

\pagebreak{}

\[\begin{array}{ccccccccc} \alpha=0 &  & \alpha=1 &  & \alpha=2 &  & \alpha=3 &  & \alpha=4 \\ \\ \begin{bmatrix}1 & 3 & 4 \\ 2 & 5 \\ 6\end{bmatrix} & \stackrel[\varphi_{\lambda,2,0}]{\phi_{\lambda,2,0}}{\rightleftharpoons} & \begin{bmatrix}1 & 2 & 4 \\ 3 & 5 \\ 6\end{bmatrix} & \stackrel[\varphi_{\lambda,2,1}]{\phi_{\lambda,2,1}}{\rightleftharpoons} & \begin{bmatrix}1 & 2 & 3 \\ 4 & 5 \\ 6\end{bmatrix} & \stackrel[\varphi_{\lambda,2,2}]{\phi_{\lambda,2,2}}{\rightleftharpoons} & \begin{bmatrix}1 & 2 & 4 \\ 3 & 5 \\ 6\end{bmatrix} & \stackrel[\varphi_{\lambda,2,3}]{\phi_{\lambda,2,3}}{\rightleftharpoons} & \begin{bmatrix}1 & 2 & 4 \\ 3 & 5 \\ 6\end{bmatrix} \\ \\ \begin{bmatrix}1 & 3 & 4 \\ 2 & 6 \\ 5\end{bmatrix} & \stackrel[\varphi_{\lambda,2,0}]{\phi_{\lambda,2,0}}{\rightleftharpoons} & \begin{bmatrix}1 & 2 & 4 \\ 3 & 6 \\ 5\end{bmatrix} & \stackrel[\varphi_{\lambda,2,1}]{\phi_{\lambda,2,1}}{\rightleftharpoons} & \begin{bmatrix}1 & 2 & 3 \\ 4 & 6 \\ 5\end{bmatrix} & \stackrel[\varphi_{\lambda,2,2}]{\phi_{\lambda,2,2}}{\rightleftharpoons} & \begin{bmatrix}1 & 2 & 3 \\ 4 & 6 \\ 5\end{bmatrix} & \stackrel[\varphi_{\lambda,2,3}]{\phi_{\lambda,2,3}}{\rightleftharpoons} & \begin{bmatrix}1 & 2 & 3 \\ 4 & 5 \\ 6\end{bmatrix} \\ \\ \begin{bmatrix}1 & 3 & 5 \\ 2 & 4 \\ 6\end{bmatrix} & \stackrel[\varphi_{\lambda,2,0}]{\phi_{\lambda,2,0}}{\rightleftharpoons} & \begin{bmatrix}1 & 2 & 5 \\ 3 & 4 \\ 6\end{bmatrix} & \stackrel[\varphi_{\lambda,2,1}]{\phi_{\lambda,2,1}}{\rightleftharpoons} & \begin{bmatrix}1 & 3 & 5 \\ 2 & 4 \\ 6\end{bmatrix} & \stackrel[\varphi_{\lambda,2,2}]{\phi_{\lambda,2,2}}{\rightleftharpoons} & \begin{bmatrix}1 & 3 & 4 \\ 2 & 5 \\ 6\end{bmatrix} & \stackrel[\varphi_{\lambda,2,3}]{\phi_{\lambda,2,3}}{\rightleftharpoons} & \begin{bmatrix}1 & 3 & 4 \\ 2 & 5 \\ 6\end{bmatrix} \\ \\ \begin{bmatrix}1 & 3 & 6 \\ 2 & 5 \\ 4\end{bmatrix} & \stackrel[\varphi_{\lambda,2,0}]{\phi_{\lambda,2,0}}{\rightleftharpoons} & \begin{bmatrix}1 & 2 & 6 \\ 3 & 5 \\ 4\end{bmatrix} & \stackrel[\varphi_{\lambda,2,1}]{\phi_{\lambda,2,1}}{\rightleftharpoons} & \begin{bmatrix}1 & 2 & 6 \\ 3 & 5 \\ 4\end{bmatrix} & \stackrel[\varphi_{\lambda,2,2}]{\phi_{\lambda,2,2}}{\rightleftharpoons} & \begin{bmatrix}1 & 2 & 6 \\ 3 & 4 \\ 5\end{bmatrix} & \stackrel[\varphi_{\lambda,2,3}]{\phi_{\lambda,2,3}}{\rightleftharpoons} & \begin{bmatrix}1 & 2 & 5 \\ 3 & 4 \\ 6\end{bmatrix} \\ \\ \begin{bmatrix}1 & 3 & 6 \\ 2 & 4 \\ 5\end{bmatrix} & \stackrel[\varphi_{\lambda,2,0}]{\phi_{\lambda,2,0}}{\rightleftharpoons} & \begin{bmatrix}1 & 2 & 6 \\ 3 & 4 \\ 5\end{bmatrix} & \stackrel[\varphi_{\lambda,2,1}]{\phi_{\lambda,2,1}}{\rightleftharpoons} & \begin{bmatrix}1 & 3 & 6 \\ 2 & 4 \\ 5\end{bmatrix} & \stackrel[\varphi_{\lambda,2,2}]{\phi_{\lambda,2,2}}{\rightleftharpoons} & \begin{bmatrix}1 & 3 & 6 \\ 2 & 4 \\ 5\end{bmatrix} & \stackrel[\varphi_{\lambda,2,3}]{\phi_{\lambda,2,3}}{\rightleftharpoons} & \begin{bmatrix}1 & 3 & 5 \\ 2 & 4 \\ 6\end{bmatrix} \\ \\ \begin{bmatrix}1 & 4 & 5 \\ 2 & 6 \\ 3\end{bmatrix} & \stackrel[\varphi_{\lambda,2,0}]{\phi_{\lambda,2,0}}{\rightleftharpoons} & \begin{bmatrix}1 & 4 & 5 \\ 2 & 6 \\ 3\end{bmatrix} & \stackrel[\varphi_{\lambda,2,1}]{\phi_{\lambda,2,1}}{\rightleftharpoons} & \begin{bmatrix}1 & 3 & 5 \\ 2 & 6 \\ 4\end{bmatrix} & \stackrel[\varphi_{\lambda,2,2}]{\phi_{\lambda,2,2}}{\rightleftharpoons} & \begin{bmatrix}1 & 3 & 4 \\ 2 & 6 \\ 5\end{bmatrix} & \stackrel[\varphi_{\lambda,2,3}]{\phi_{\lambda,2,3}}{\rightleftharpoons} & \begin{bmatrix}1 & 3 & 5 \\ 2 & 6 \\ 4\end{bmatrix} \\ \\ \begin{bmatrix}1 & 3 & 5 \\ 2 & 6 \\ 4\end{bmatrix} & \stackrel[\varphi_{\lambda,2,0}]{\phi_{\lambda,2,0}}{\rightleftharpoons} & \begin{bmatrix}1 & 2 & 5 \\ 3 & 6 \\ 4\end{bmatrix} & \stackrel[\varphi_{\lambda,2,1}]{\phi_{\lambda,2,1}}{\rightleftharpoons} & \begin{bmatrix}1 & 2 & 5 \\
3 & 6 \\ 4\end{bmatrix} & \stackrel[\varphi_{\lambda,2,2}]{\phi_{\lambda,2,2}}{\rightleftharpoons} & \begin{bmatrix}1 & 2 & 4 \\ 3 & 6 \\ 5\end{bmatrix} & \stackrel[\varphi_{\lambda,2,3}]{\phi_{\lambda,2,3}}{\rightleftharpoons} & \begin{bmatrix}1 & 2 & 5 \\ 3 & 6 \\ 4\end{bmatrix} \\
\\ \begin{bmatrix}1 & 4 & 6 \\ 2 & 5 \\ 3\end{bmatrix} & \stackrel[\varphi_{\lambda,2,0}]{\phi_{\lambda,2,0}}{\rightleftharpoons} & \begin{bmatrix}1 & 4 & 6 \\ 2 & 5 \\ 3\end{bmatrix} & \stackrel[\varphi_{\lambda,2,1}]{\phi_{\lambda,2,1}}{\rightleftharpoons} & \begin{bmatrix}1 & 3 & 6 \\ 2 & 5 \\ 4\end{bmatrix} & \stackrel[\varphi_{\lambda,2,2}]{\phi_{\lambda,2,2}}{\rightleftharpoons} & \begin{bmatrix}1 & 4 & 6 \\ 2 & 5 \\ 3\end{bmatrix} & \stackrel[\varphi_{\lambda,2,3}]{\phi_{\lambda,2,3}}{\rightleftharpoons} & \begin{bmatrix}1 & 4 & 5 \\ 2 & 6 \\ 3\end{bmatrix} \end{array}\]

\medskip

\noindent\textsc{Einstein Institute of Mathematics, the Hebrew University of Jerusalem, Edmund Safra Campus, Jerusalem 91904, Israel}

\noindent E-mail address: avichai.cohen2@mail.huji.ac.il

\noindent\textsc{Einstein Institute of Mathematics, the Hebrew University of Jerusalem, Edmund Safra Campus, Jerusalem 91904, Israel}

\noindent E-mail address: zemels@math.huji.ac.il

\end{document}